\begin{document}
\title*{Modal Semantics for Reasoning with Probability and Uncertainty}
\author{Nino Guallart}

\maketitle

\begin{abstract}  \ 
This paper belongs to the field of probabilistic modal logic, focusing on a comparative analysis of two distinct semantics: one rooted in Kripke semantics and the other in neighbourhood semantics. The primary distinction lies in the following: The latter allows us to adequately express belief functions (lower probabilities) over propositions, whereas the former does not. Thus, neighbourhood semantics is more expressive.
The main part of the work is a section in which we study the modal equivalence between probabilistic Kripke models and a subclass of belief neighbourhood models, namely additive ones. We study how to obtain modally equivalent structures.

\textbf{Keywords:} Neighbourhood semantics, Kripke semantics, Dempster-Shafer belief function, probability,  modal logic.

\end{abstract}

\section{Introduction}

The convergence of probability theory and modal logic has garnered significant attention across multiple disciplines.  Probabilistic modal logic has been the object of intense research in these fields over the last few decades.  Authors such as Nilsson \cite{nilsson1986probabilistic}, Bacchus \cite{bacchus1991representing}, Fagin and Halpern \cite{halpern1992two,halpern2003reasoning,fagin2013new,fagin1988reasoning,fagin1991uncertainty}, Aumann \cite{aumann1995backward,aumann1999interactive}, Samet, Heifetz and Mongin \cite{heifetz1998modal,heifetz2001probability,heifetz1998topology}
have applied modal logic for the formalisation of subjective probability in different fields: economics, artificial intelligence or philosophy of science.  The basic idea is to develop a probability space within some kind of structure, with Kripke models being the most obvious choice. Modal semantics allows the combination of probability with other modal operators, and hence its applicability to different kinds of modal logics such as epistemic logic or dynamic epistemic logic \cite{kooi2003probabilistic}. 

The idea of generalising probability is not new and early works on lower probabilities can be dated to Keynes \cite{keynes1921treatise} and Koopman \cite{koopman1940bases}. In this work, however, we will focus on belief functions, probably the most known generalisation of probability, developed from the works of Dempster \cite{dempster1967upper}  and Shafer \cite{shafer1976mathematical}. 
There is also an extensive work on logic for belief functions: Sossai \cite{sossai1999belief,sossai2001fusion}
Ruspini (\cite{ruspini1986logical} and Smets \cite{smets1988belief,smets1991probability} have developed works in the field. A remarkable point is the similarity between modal epistemic logics and Dempster-Shafer's belief functions, which has been studied since the late 80's \cite{fine1988lower} \cite{boeva1998modelling}\footnote{See \cite{fine1988lower} for a comprehensive description of different works in the area.}. We are mainly interested in the interpretation of belief functions as generalisation of probability, leaving aside the interpretation of Dempster-Shafer theory as a theory of evidence. Some approaches to the topic, such as \cite{flaminio2011reasoning,esteva2000reasoning,hajek2013metamathematics}, \cite{esteva2000reasoning} focus on the uncertainty of fuzzy events. 

\textbf{Goals and structure of this work.}  The purpose of this work is to develop a neighborhood semantics for belief functions that is modally equivalent to Kripke structures with probability, making the latter a subclass of neighbourhood structures. The structure of the paper is as follows: Section \ref{sect-0} offers a summary of the main theoretical concepts in probability that will be used. This work develops a modal logic for probability and belief functions and two semantics for it:  in section \ref{sect1} we introduce the syntax of the language, and then we focus on two possible semantics for it: We first develop a standard Kripke semantics \ref{subsec:KM}, and then a semantics based in neighbourhood semantics \ref{subsec:NM}.  We make a study of several issues related to the structure of the system of neighbourhoods in section \ref{sec:NMC}, which will be used in section \ref{sect3}, where it is studied the correspondence between probabilistic Kripke models and a subclass of belief neighbourhood models, additive ones. It is based on the proofs of the relationship between augmented neighbourhood models and Kripke models. Thus  we will compare them, observing their differences in expressiveness.  Section \ref{secfinal} concludes the paper.

\section{Basic concepts}\label{sect-0}

Subjective probability is an agent's estimate or belief about the likelihood of an event. In this work, we will use propositional formulas to represent events using atomic and compound propositions, while modal operators ranging over propositions will serve to represent the agent's degrees of belief.  Probability is defined over $\sigma$-algebras, but here we will work over Boolean algebras.

\begin{definition}
    \textbf{(Probability measure on a Boolean algebra.)} Given a Boolean algebra $\mathcal{A}$, the  probability measure $pr\colon \mathcal{A}\to [0,1]$  is a function that satisfies the following axioms:
    
\begin{enumerate}
\item $pr(A)\geqslant 0$ for any $A\in\mathcal{A}$.
\item $pr(\top) = 1$.
\item Finite additivity: $pr(\bigcup_{i=1}^n A_i)= \sum_{A_i=1}^n pr (A_i)$, for a family of pairwise disjoint events  $\{A_i\}_{i=1}^n$, $n\in \mathbb{N} $.
\end{enumerate}
\end{definition}

For $\sigma$-algebras in general, the third axiom is stated for countable sets: $pr(\bigcup_{i=1}^\infty A_i)= \sum_{A_i=1}^\infty pr (A_i)$.

Belief functions are a generalisation of subjective probabilities \cite{halpern1992two}. The following definition is taken from \cite{corsi2023logico}, which is an adaptation to Boolean algebras of the  definition in  \cite{shafer1976mathematical}:

\begin{definition}{\textbf{(Belief function on a Boolean algebra.)}} A belief function on a Boolean algebra $A$ is a function $bel\colon A \to [0, 1]$ satisfying: 
\begin{enumerate}
\item $bel(\top)=1$ and $bel(\bot)=0$.
\item $bel(A)\geqslant 0$ for any $A\in \mathcal{A}$.
\item Superadditivity: $bel(\bigcup_{i=1}^n A_n)=\sum_{i=1}^n \sum_{J\subseteq \{1,\ldots,n\}:|J|=i} (-1)^{i+1} bel(\bigcap_{j\in J} A_j)$
\end{enumerate}

\end{definition}

The last condition is also called monotonicity. It can be seen as a weakened form of additivity. Belief functions are a subset of a broader family, lower probabilities. For the purposes of this work, it is sufficient to know that lower probabilities encompass belief functions, that the lower probability of $\top$ is 1, and the lower probability of $\bot$ is 0, whereas monotonicity is not met, but a more general condition instead \cite{halpern1992two}. Lower probabilities and belief functions can be seen as the lower bound of a set of probabilities, that is, stating $bel(A)=\alpha$ entails that all probabilities for $A$ greater than or equal to $\alpha$ are compatible with that belief function. From now on, when the term ``lower probabilities" in mentioned in this work, it actually refers to belief functions as described above.

The following remark, that is a consequence of the previous definitions. will be one of the central ideas of this paper. 

\begin{remark}\label{prandbel}
\textbf{(Probability functions and belief functions.)}
A probability function is an additive belief function (an additive lower probability). \cite{corsi2023logico}
\end{remark}

As we will see, whereas Kripke semantics is able to interpret probabilities directly, in neighbourhood semantics they are a special case of belief functions. Neighbourhood semantics thus offers a broader expressiveness than Kripke semantics.

\section{Syntax of $\mathcal{L}_{PR}$}\label{sect1}

We now develop a modal propositional language to express probabilistic beliefs. It is a variation of works such as \cite{fagin1988reasoning}, \cite{heifetz2001probability}, \cite{aumann1999interactive}, and other similar works, although with a  different notation. 
 \texttt{At} will denote a non-empty finite set of atomic propositions.

\begin{definition} \textbf{(Syntax.)} We define recursively the formulas of the language $\mathcal{L}_{PR}$ as follows, where $p\in \texttt{At}$:

\[  \phi ::= p\ |\ \neg\phi\ |\ (\phi\lor\phi )\ |\ B_{\geqslant \alpha} \phi\ |\  \underline{B}_{\geqslant\alpha}\phi \]
\end{definition}

The usual equivalences hold: $\phi\land\psi$ stands for $\neg (\neg\phi\lor\neg\psi)$, and $\top$ and $\bot$ are for $\phi\lor\neg\phi$ and $\phi\land\neg\phi$. 
 $\alpha\in [0,1]$ is a rational value. $B_{\geqslant \alpha}\phi $ is read in the same way that $L_\alpha$ in \cite{heifetz2001probability}: ``\emph{ the agent assigns to $\phi$ a probability equal to or greater than $\alpha$ (at least $\alpha$)"}. Similarly $B_{\leqslant\alpha}\phi$ is shorthand for $B_{\geqslant 1-\alpha}\neg\phi$, which is read as ``\emph{the agent assigns to $\phi$ a probability at most $\alpha$}". $B_{=\alpha}\phi$, ``\emph{the agent assigns to $\phi$ a probability of (exactly) $\alpha$}", is just $B_{\geqslant\alpha}\phi\land B_{\leqslant\alpha}\phi$. $B_{<\alpha}\phi$ is $\neg B_{\geqslant \alpha}\phi$  and $B_{>\alpha}\phi$ is $\neg B_{\geqslant 1-\alpha}\neg\phi$.

Analogously, $\underline{B}_{= \alpha}\phi$ refers to a belief function, which can be understood as a lower probability, and could be read as  ``\emph{the agent assigns a lower probability   of $\alpha$ to $\phi$}", or more informally as ``\emph{the agent considers that the probability of $\phi$ is at least $\alpha$}" (that is, $\alpha$ is the lower probability of $\phi$). $\underline{B}_{\geqslant \alpha}\phi$ has a rather complicated translation, as ``\emph{the agent assigns a lower probability to $\phi$, which is at least $\alpha$}".  $\underline{B}_{\leqslant \alpha}\phi$, $\underline{B}_{> \alpha}\phi$ and  $\underline{B}_{< \alpha}\phi$ are defined as their probability counterparts.

\section{Semantics of $\mathcal{L}_{PR}$}\label{sec:sem}

\subsection{Kripke semantics}\label{subsec:KM}

Works on Kripkean semantics for probabilistic modal logic such as \cite{shirazi2007probabilistic} define a probability distribution over the worlds that are accessible from a certain world $w$, and it is the type of semantics that we will develop here. Fagin and Halpern \cite{fagin1990logic} provide another approach, a probability distribution defined for measurable \emph{sets} of accessible worlds in a Kripke model.

\begin{definition} 
\textbf{(Probabilistic Kripke model.)}\label{prm-k} A probabilistic Kripke model  $\mathcal{M}^P_K$ is  a tuple $\langle W, \mu,R,v \rangle$, where $W$ is a non-empty and finite set of worlds or states, $v\colon \texttt{At}\to \wp(W)$ a valuation function that assigns to each letter the set of worlds in $W$ that satisfy it, $\mu:W\times W\to [0,1]$  a probability function following these rules for a given $w$:

\begin{enumerate}

    \item $\mu (w,w')\geqslant 0$ 

    \item $\sum_{w_i\in W} \mu(w,w_i) = 1$
\end{enumerate}

$R\colon W\times W$ is defined in this way: $(w,w')\in R$ (usually stated as $wRw'$) iff $\mu(w,w')>0$, and $(w,w')\notin R$ iff $\mu(w,w')=0$. 
\end{definition}

The definition of $R$ and the requirement that $\sum_{w_i\in W}\mu(w,w')=1$  entail that for every $w$ there is at least one $w'$ such that $wRw'$. Neither $R$ nor $\mu$ are symmetric: In general, $\mu(w,w')\neq \mu(w',w)$ and $wRw'$ does not equate to $w'Rw$.

\begin{remark}\label{accessK} 
\textbf{($R$ and $\mu$:)}
The accessibility relationship $R$ is not strictly necessary, since it is implicit in the probability measure $\mu$. We have included it for simplifying some explanations.
\end{remark}

\begin{definition} \textbf{(Probability and belief measures in Kripke frames.)}
Given a $X\subseteq W$, we define its probability and belief measures from world $w\in W$ as follows:

\[  pr(w,X)=bel(w,X)=\sum_{w_i\in X} \mu(w,w_i) \]
\end{definition}

A pointed model is a pair $\langle M,w\rangle$ with $M$ a model and $w$ an world of its domain $W$. We will write $M,w\models_K \phi$ if the formula $\phi$ is true at the world $w\in W$ in Kripke model $M$. We assign truth values to formulas at a certain world as follows:

\begin{definition} \textbf{(Interpretation of $\mathcal{L}_{PR}$ in probabilistic Kripke models.)}
Given a probability Kripke model $\mathcal{M}^P_K$, the satisfaction relation $\models_K$ between pointed models and formulas is defined as follows:

\begin{enumerate}
\item $\mathcal{M}^P_K,w\models_K p$ iff $w\in v(p)$.
\item $\mathcal{M}^P_K,w\models_K \neg\phi$ iff $\mathcal{M}^P_K,w\not\models\phi$.
\item $\mathcal{M}^P_K,w\models_K (\phi\lor\psi)$ iff $\mathcal{M}^P_K,w\models\phi$ or $\mathcal{M}^P_K,w\models\psi$.
\item $\mathcal{M}^P_K,w\models_K B_{\geqslant\alpha}\phi$ \ iff $pr(w,\llbracket \phi\rrbracket_M)\geqslant \alpha$
\item  $\mathcal{M}^P_K,w\models_K \underline{B}_{\geqslant\alpha}\phi$ iff $bel(w,\llbracket \phi\rrbracket_M)\geqslant \alpha$
\end{enumerate}
\end{definition}

 $\llbracket\phi\rrbracket_{\mathcal{M}^P_K}$ denotes the set $\{ w\in W\ |\ M,w\models_K \phi\}$. 
 We simply use  $\llbracket\phi\rrbracket_M$ or $\llbracket\phi\rrbracket$ if there is there no possibility of confusion.
 The interpretation of the probabilistic modal operator we have just defined is close to previous works in subjective probabilities (\cite{shirazi2007probabilistic} for example), and formally it is close to probabilistic labelled transition systems \cite{LARSEN19911}.

\begin{definition}\label{def:leqKM}
\textbf{(Logical consequence and logical equivalence in probabilistic Kripke models.)}
A formula $\psi$ is a logical consequence of $\phi$  in probabilistic Kripke models if for all pointed models such that $\mathcal{M}^P_K,w\models\phi$, it is also that  $\mathcal{M}^P_K,w\models\psi$. $\phi$ and $\psi$ are logically equivalent in probabilistic Kripke models provided that,  for all pointed models, $\mathcal{M}^P_K,w\models\phi$ if and only if $\mathcal{M}^P_K,w\models\psi$.
\end{definition}

\begin{remark} \textbf{(Events and propositions.) }
We can identify each event with the set of all logically equivalent propositions, instead of with propositions. Thus, if $\phi$ and $\psi$ are logically equivalent, for any Kripke model and any world in it, $\mathcal{M}^P_K,w\models B_{\geqslant \alpha}\phi$ if and only if $\mathcal{M}^P_K,w\models B_{\geqslant \alpha}\psi$. 
\end{remark}

\begin{remark}\label{pr-bel-KM} \textbf{(Equivalence between probabilities and belief functions in $\mathcal{L}_{PR}$.) } Given that the conditions for the satisfaction of $B_{\geqslant\alpha}\phi$ and $\underline{B}_{\geqslant\alpha}\phi$ are the same, they are logically equivalent. This is not surprising, since probabilities are a kind of belief functions \ref{prandbel}. However, we cannot adequately interpret a formula whose interpretation is a belief function which is not a probability. Since we are going to study the relationship between Kripke and neighbourhood semantics for $\mathcal{L}_{PR}$, we have included the operator $\underline{B}_{\geqslant\alpha}$ in order to ensure that both semantics interpret all the formulas of the same language. 
\end{remark}

\begin{example}{(Example 1: A simple probabilistic Kripke model.)}\label{example1}

Let us define $W=\{w_1,w_2,w_3,w_4\}$. $\mu\colon W\times W$ is defined in the following matrix, where $M(\mu)_{i,j}=\mu (w_i,w_j)$ (for example, $\mu (w_1,w_2)=M(\mu)_{1,2}=0.4$):

\setlength{\arraycolsep}{8pt}

\[
M(\mu)_{ij}= \footnotesize{\begin{pmatrix}
0 & 0.4  & 0.6 & 0 \\
0.4 & 0  & 0.6 & 0\\
0 & 0  & 0.1 & 0.9 \\
0 & 0 & 0 & 1 \\
\end{pmatrix}
}
\]

It can be verified that the sum of each row is 1.  We add a valuation function such that $v(p)=\{w_1,w_3\}$ and $v(q)=\{w_1,w_2\}$. In the picture belowe, we have omitted the relationships when $\mu$ is 0.

\begin{center}
\begin{tikzpicture}

\node  [draw,circle] (a) at (0,0)   {\small $w_1$};
\node  [draw,circle] (b) at (2,0) {\small $w_2$};
\node  [draw,circle] (c) at (0,-2) {\small $w_3$};
\node [draw,circle] (d) at (2,-2) {\small $w_4$};

\node [anchor=west, align=left] at (-1,0) {\small $p,q$};
\node [anchor=west, align=left]  at (-1,-2) {\small $p$};
\node [anchor=west, align=left] at (2.5,0) {\small $q$};

\draw[->] (a) to[out=30, in=150, looseness=1] node[above] {\small $0.4$} (b);
\draw [->] (a) -- node[left] {\small $0.6$}  ++(c);

\draw[->] (b) to[in=330, out=210, looseness=1] node[below] {\small $0.4$} (a);
\draw [->] (b) -- node[right] {\small $0.6$}  ++(c);

\draw [->] (c) -- node[below] {\small $0.9$}  ++(d);

\draw[->] (c) to[out=240, in=210, looseness=8] node[left] {\small $0.1$} (c);
\draw[->] (d) to[out=330, in=300, looseness=8] node[right] {\small $1$} (d);

\end{tikzpicture}

\textit{Picture 1. Example of probabilistic Kripke model.}
\end{center}

We have that $R(w_1)=\{w_2,w_3\}$. In this model, $M,w_1\models_K B_{\geqslant 0.6} p$ and  $M,w_1\models_K B_1 B_{\geqslant 0.1} p$. 

\end{example}

\subsection{Neighbourhood semantics}\label{subsec:NM}

\begin{example}{\textbf{(Example 2: Expressing bounded probabilistic beliefs.)} }

Probability modal logic is commonly used for expressing beliefs with a certain degree, but its formulation in natural language may be misleading. For example, if $r$ means that it will rain tomorrow, $B_{\geqslant 0.5} r$ is understood as  ``Ann believes that the probability of raining tomorrow is at least 50\%  (i.e. 0.5)", where the modal operator is used to express the graded beliefs of the agent we are considering, namely Ann. However,  the interpretation of the modal formula according to the previous definition is not exactly what has been expressed in natural language.  $B_{\geqslant 0.5} r$ actually means that Ann believes with a certain degree of confidence that it will rain tomorrow, and that degree is at least 0.6: There is a definite probability value, which is at least 0.5.

``Ann believes that the probability of raining tomorrow is at least 50\%" is usually interpreted in natural language as a lower probability: Ann is considering that the probability of raining cannot be lower than 50\%, but she does not have a definite value, and therefore all values above 50\% are compatible with her belief. If we see the model in Example 1, $w_1$ satisfies $B_{\geqslant 0.5} r$. If we try to express a lower probability by stating $B_{\geqslant 0.5} r\land \neg B_{> 0.5} r$, $w_1$ does not satisfy that formula, which actually expresses that the agent believes that the probability of raining is exactly 50\%. Thus, we need another semantics for interpreting lower probabilities.
\end{example}

A possible alternative to Kripke semantics is neighbourhood semantics. It is better suited for non-normal logic, and probabilistic modal logic is a monotonic, non-normal modal logic. Neighbourhood semantics for probability modal logic are much rarer than Kripke semantics: Arló-Costa \cite{arlo2005non} proposed a model for a non-normal high probability operator,  and Herzig \cite{herzig2003modal}  offered a qualitative interpretation of the modal operator.

We briefly recall a few preliminary definitions of neighbourhood semantics:

\begin{definition}
{\textbf {(Neighbourhood frame and neighbourhood model.)
}}\label{def:-Neigh-frame-and-model}\index{Neighbourhood frame and model}
A \emph{neighbourhood frame} is the ordered pair $\langle W,N\rangle$
formed by a non-empty and finite set $W$ and a neighbourhood function over it
$N:\ W\to\wp(\wp(W))$ that assigns a set of subsets of $W$ to each $w\in W$. Given a neighbourhood frame $\langle W,N\rangle$
and a set of atomic propositional formulas \texttt{At} with a valuation
function $v:\ \texttt{At}\to\wp(W)$, a \emph{neighbourhood model}
is a triple $M=\langle W,N,v\rangle$.
\end{definition}

In neighbourhood semantics, $M,w$ satisfies $\square\phi$ if $\llbracket\phi\rrbracket_M\in N(w)$, that is, if the set formed by all worlds that satisfy $\phi$ is one of the sets in the neighbourhood of $w$. 

\begin{definition}
\textbf{(Some properties of neighbourhoods.)}
These are some properties that neighbourhoods may have:
\begin{enumerate}
\item A neighbourhood is \emph{monotonic} if $X\in N(w)$ and $X\subseteq Y$ entails $Y\in N(w)$.
\item A neighbourhood is closed under finite intersections provided that for a family of sets $\{X_i\}_{i\in J}$ such that for each $i\in J$ ($J$ finite), $X_i\in N(w)$, then $\bigcap \{X_i\}_{i\in J}\in N(w)$.
\item A neighbourhood is closed under finite unions provided that for a family of sets $\{X_i\}_{i\in J}$ such that for each $i\in J$ ($J$ finite), $X_i\in N(w)$, then $\bigcup \{X_i\}_{i\in J}\in N(w)$.
\item A neighbourhood is closed under complement provided that for each $X\in N(w)$, $X^C\in N(w)$.
\item A neighbourhood $N(w)$ \emph{contains its core} $\cap N(w)$ if the core of the neighborhood, the intersection of all sets in $N(w)$, is a set in $N(w)$.
\item A monotonic neighbourhood that contains its core is said to be \emph{augmented}.
\end{enumerate}
\end{definition}

A high-probability modal operator is monotonic ($P(\phi\land\psi)\to P\phi\land P\psi$ / $P\phi\lor P\psi\to P(\phi\lor\psi)$  is a valid axiom scheme), where ``high probability" refers to a probability higher than a given threshold. However, this axiom can be generalised for any probability,
We will consider a belief function operator, intended to be a lower probability operator. To define the neighbourhood semantics of our modal probabilistic operator, we will add a probabilistic metrics. We will combine the previous idea of the high probability operator with a set of nested neighbourhoods, akin to Lewis' sphere model in \cite{lewis1973}, one for each probability. 
We will also add a belief function.

\begin{definition} {\bfseries (Belief neighbourhood   model.)} A tuple $\mathcal{M}^B_N=\langle W,b,$ $\{N_{\geqslant \alpha}\}_{\alpha\in [0,1]},$ $\{N_{> \alpha}\}_{\alpha\in [0,1)},$ $v\rangle$ is  a belief neighbourhood   model if $W$ is a non-empty, finite set of worlds, $N$ a neighbourhood function $W\to \wp(\wp(W))$, $b$ a belief function $W\times \wp(W)\to [0,1]$, and $v\colon \texttt{At}\to W$ a valuation function.
\end{definition}

\begin{definition} \textbf{(Belief function in $N(w)$.)}
The function $b:W\times\wp(W)\to [0,1]$ assigns a value to each $X\in N_{\geqslant 0}(w)$,  satisfying:
\begin{enumerate}
\item $b(w,W)=1$,\ $b(w,\varnothing)=0$.
\item $bel(w,\bigvee_{i=1}^n \phi_n)\geqslant \sum_{i=1}^n \sum_{J\subseteq \{1,\ldots,n\}:|J|=i} (-1)^{i+1} bel(w,\bigwedge_{j\in J} \phi_j)$
\end{enumerate}
\end{definition}

\begin{definition} \textbf{(Monotonicity of in $N_{\geqslant \alpha}(w)$.)} \label{monot-n}
All $N_{\geqslant \alpha}(w)$ are monotonic: If $X\in N_{\geqslant \alpha}(w)$ and $X\subseteq Y$, then $Y\subseteq N_{\geqslant \alpha}(w)$.
\end{definition}

\begin{definition} \textbf{($N_{\alpha}(w)$ and $b(w,X).)$}
For all $w\in W$, if $b(w,X)\geqslant \alpha$, then $X\in N_{\geqslant \alpha}(w)$, and if $b(w,X)> \alpha$, then $X\in N_{> \alpha}(w)$.
\end{definition}

\begin{proposition}
\textbf{(Nesting of neighbourhoods.)}  From the previous definitions, the system of nested neighbourhoods is established in this way: 

\begin{enumerate}
    \item If $\alpha\geqslant \beta$, then $N_{\geqslant \alpha}(w)\subseteq N_{\geqslant\beta}(w)$.
    \item $N_{> \alpha}(w)\subseteq N_{\geqslant\alpha}(w)$.
    \item If $\alpha>\beta$, then $N_{\geqslant \alpha}(w)\subseteq N_{>\beta}(w)$.
\end{enumerate}
\end{proposition}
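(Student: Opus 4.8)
The plan is to prove the three nesting inclusions directly from the two structural definitions immediately preceding the proposition, namely the monotonicity definition (Definition \ref{monot-n}) and the definition linking $b(w,X)$ to membership in $N_{\geqslant\alpha}(w)$ and $N_{>\alpha}(w)$. In fact, the key observation is that these latter conditions are best read as biconditionals — the intended semantics is $X\in N_{\geqslant\alpha}(w)$ \emph{iff} $b(w,X)\geqslant\alpha$, and likewise for $N_{>\alpha}$ — so the whole proposition reduces to elementary facts about the order relations $\geqslant$ and $>$ on $[0,1]$. I would first make this reading explicit (or at least invoke it as the characterisation of the neighbourhood systems), since without the ``only if'' direction one cannot conclude that membership in $N_{\geqslant\alpha}(w)$ forces a lower bound on $b(w,X)$.

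Then each item is a one-line argument. For item 1, suppose $\alpha\geqslant\beta$ and take $X\in N_{\geqslant\alpha}(w)$; then $b(w,X)\geqslant\alpha\geqslant\beta$, so $b(w,X)\geqslant\beta$ and hence $X\in N_{\geqslant\beta}(w)$. For item 2, take $X\in N_{>\alpha}(w)$; then $b(w,X)>\alpha$, which implies $b(w,X)\geqslant\alpha$, so $X\in N_{\geqslant\alpha}(w)$. For item 3, suppose $\alpha>\beta$ and take $X\in N_{\geqslant\alpha}(w)$; then $b(w,X)\geqslant\alpha>\beta$, giving $b(w,X)>\beta$, hence $X\in N_{>\beta}(w)$. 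Transitivity and the density-free comparison $x\geqslant\alpha>\beta\Rightarrow x>\beta$ are all that is used. I would also note that item 3 follows formally from items 1 and 2 (compose $N_{\geqslant\alpha}\subseteq N_{>\beta}$ via an intermediate, using $\alpha>\beta$), but the direct argument via $b$ is cleaner.

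The only genuine subtlety — and the step I would flag as the main obstacle — is the logical status of the definition ``if $b(w,X)\geqslant\alpha$ then $X\in N_{\geqslant\alpha}(w)$'': as literally stated it is a one-directional implication, and the nesting proof needs the converse. I would address this by remarking that, together with the monotonicity requirement and the role of $b$ as \emph{the} belief function attached to the model, the neighbourhood systems are pinned down uniquely as $N_{\geqslant\alpha}(w)=\{X\subseteq W : b(w,X)\geqslant\alpha\}$ and $N_{>\alpha}(w)=\{X\subseteq W: b(w,X)>\alpha\}$, so the equivalence holds and the proof goes through. (If one instead wants to keep the definitions strictly one-directional, the proposition should be read as a constraint the models are required to satisfy, and the argument above shows it is consistent and automatically met by the canonical choice.) Monotonicity of $b$ in its second argument, which would be the alternative route, is not needed here and in fact is not assumed; everything rests purely on monotonicity of the order on thresholds.
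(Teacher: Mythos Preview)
Your proof is correct and is essentially what the paper intends: the proposition is stated without proof, as an immediate consequence ``from the previous definitions,'' and your derivation from the threshold characterisation of $N_{\geqslant\alpha}$ and $N_{>\alpha}$ is exactly the intended one-line argument for each item.

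Your flag on the one-directional phrasing of the definition is well taken and is the only genuine point worth recording. The paper's own Remark~\ref{accessN} confirms that the biconditional reading is intended: it says the nested neighbourhoods are ``not strictly necessary'' and could be developed ``just by using the belief measure $b$,'' i.e.\ $N_{\geqslant\alpha}(w)=\{X\subseteq W: b(w,X)\geqslant\alpha\}$ and similarly for $N_{>\alpha}$. With that reading your argument goes through verbatim; without it, as you note, the proposition would have to be taken as an additional axiom on the models rather than a derived fact.
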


\begin{remark}\label{accessN}
\textbf{(A remark about the relationship between $N_{\geqslant\alpha}(w)$ and $b(w,X)$.)}
Analogously to \ref{accessK}, the system of nested neighbourhoods $N_{\geqslant\alpha}$ is not strictly necessary. We could develop it just by using the belief measure $b$. If we did so, however,  we should redefine   neighbourhood semantics' concepts in terms of a \emph{neighbourhood measure}, and all the concepts we are going to introduce here should be formulated in those terms.
\end{remark}

Since $b(w,\varnothing)=0$ for all $w\in W$, then $\varnothing\in N_{\geqslant 0}(w)$. Therefore, all subsets of $W\in N_{\geqslant \alpha}(w)$.  In practice, we are only interested in the sets that correspond to the truth set of some $\phi$. Thus, for any $w\in$ and any formula $\phi$, $\llbracket\phi\rrbracket_M\in N_{\geqslant 0}(w)$. If $\beta =b(w,\llbracket\phi\rrbracket_M)$, then $\llbracket\phi\rrbracket_M\in N_{\geqslant \alpha}(w)$ for all $\alpha\leqslant \beta$. In order to do that, we must define the semantics of the model.

\begin{definition} \textbf{(Semantics in belief neighbourhood models.)}

Given a belief neighbourhood model $\mathcal{M}^B_N$, the satisfaction relation $\models_N$ between pointed models and formulas is defined as follows:

\end{definition}
\begin{enumerate}
\item $\mathcal{M}^{ B}_{N},w\models p$ iff $w\in v(p)$.
\item $\mathcal{M}^{ B}_{N},w\models \neg\phi$ iff $\mathcal{M}^{ B}_{N},w\not\models\phi$.
\item $\mathcal{M}^{ B}_{N},w\models (\phi\lor\psi)$ iff $\mathcal{M}^{ B}_{N},w\models\phi$ or $\mathcal{M}^{ B}_{N},w\models\psi$.

\item $\mathcal{M}^{ B}_{N},w\models \underline{B}_{\geqslant \alpha}\phi$  iff $\llbracket\phi\rrbracket \in N(w)$ and $b(w,\llbracket\phi\rrbracket)\geqslant \alpha$.
\item $\mathcal{M}^{ B}_{N},w\models \underline{B}_{> \alpha}\phi$  iff $\llbracket\phi\rrbracket \in N(w)$ and $b(w,\llbracket\phi\rrbracket)> \alpha$.

\vspace{2mm}

The following definition derives from the previous one:

\vspace{2mm}

\item $\mathcal{M}^{ B}_{N},w\models B_{\geqslant \alpha}\phi$ iff $\llbracket\phi\rrbracket \in N(w)$ and $b(w,\llbracket\phi\rrbracket)\geqslant \alpha$ and $b(w,\llbracket\neg\phi\rrbracket)_M+b(w,\llbracket\phi\rrbracket)_M=1$.
\end{enumerate}

A special case of the latter is $\mathcal{M}^{ B}_{N},w\models B_{= \alpha}\phi$, which is $b(w,\llbracket\phi\rrbracket)_M=\alpha$.  $b(w,\llbracket\neg\phi\rrbracket)_M=1-\alpha$ according to the last definition.

 $\llbracket\phi\rrbracket_{\mathcal{M}^B_N}$ denotes the set $\{ w\in W\ |\ M,w\models_N \phi\}$. Again, we will denote it by  $\llbracket\phi\rrbracket_M$ or $\llbracket\phi\rrbracket$ is there no possibility of confusion about the semantics we are using.

\begin{definition}\label{def:leqNM}
\textbf{(Logical consequence and logical equivalence in belief neighbourhood models.)}
A formula $\psi$ is a logical consequence of $\phi$ if for all belief neighbourhood pointed models $\mathcal{M}^B_N,w$ that satisfy $\phi$, it is also that  $\mathcal{M}^B_N,w\models_K\psi$.
$\phi$ and $\psi$ are logically equivalent provided that, for all pointed belief neighbourhood models, $\mathcal{M}^B_N,w\models_K\phi$ if and only if $\mathcal{M}^B_N,w\models_K\psi$.

\end{definition}

\begin{remark}\label{byp-N} \textbf{(Probabilities and belief functions in neighbourhood models.) } In neighbourhood semantics, $\underline{B}_{\geqslant\alpha}\phi$ is a logical consequence of $B_{\geqslant\alpha}\phi$. The converse is not true, so therefore they are not logically equivalent.

\end{remark}

\begin{proposition}\label{propertiesn}
Some properties of the set of nested neighbourhoods in the belief neighbourhood model are the following:

\begin{enumerate}
    \item $N_{\geqslant 0}(w)$ and $N_{1}(w)$ are the only neighbourhoods that contain their respective cores ($\varnothing$ for $N_{\geqslant 0}(w)$). Since all neighbourhoods are monotonic, $N_{\geqslant 0}(w)$ and $N_{1}(w)$ are augmented.\label{prop:N0-1aug}
    \item  $N_{\geqslant 0}(w)$ and $N_{1}(w)$ are the only neighbourhoods that are closed under intersection, complement and union.
    \item $N_{\geqslant 0}(w)$ is also the only one in which  $W$ and $\varnothing$ belong to the neighbourhood.
\end{enumerate}
\end{proposition}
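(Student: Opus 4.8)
The plan is to prove the three items in order, using the nesting proposition and the fact that every $N_{\geqslant\alpha}(w)$ is monotonic together with the defining link $b(w,X)\geqslant\alpha\Rightarrow X\in N_{\geqslant\alpha}(w)$. Throughout I would fix a world $w$ and abbreviate $N_{\geqslant\alpha}$ for $N_{\geqslant\alpha}(w)$, and similarly for $N_{>\alpha}$ and $N_1 := N_{\geqslant 1}$; I would also use that, since $b(w,W)=1$ and $b(w,\varnothing)=0$, we have $W\in N_{\geqslant\alpha}$ for every $\alpha\in[0,1]$ and $\varnothing\in N_{\geqslant 0}$, while $\varnothing\notin N_{>0}$ because there is no $X$ with $b(w,X)>0$ contained in $\varnothing$ and monotonicity only adds supersets.

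For item \ref{prop:N0-1aug}, I would first observe that $N_{\geqslant 0}$ contains every subset of $W$ (as noted in the text: $\varnothing\in N_{\geqslant 0}$ and monotonicity pushes this up to all of $\wp(W)$), so its core is $\bigcap N_{\geqslant 0}=\varnothing\in N_{\geqslant 0}$, and being monotonic it is augmented. For $N_1$, the core is $\bigcap\{X : b(w,X)=1 \text{ or } X\supseteq\text{some such set}\}$; since $W$ itself is the only a priori guaranteed member, but in general there may be smaller sets of $b$-value $1$, the core is $C:=\bigcap N_1$, and I must show $C\in N_1$. Here I would invoke the superadditivity (monotonicity) axiom of $b$: if $b(w,X)=1$ and $b(w,Y)=1$ then the inclusion–exclusion inequality forces $b(w,X\cap Y)\geqslant b(w,X)+b(w,Y)-b(w,X\cup Y)\geqslant 1+1-1=1$, so $b(w,X\cap Y)=1$; by finiteness of $W$ the core $C$ is a finite intersection of sets of $b$-value $1$, hence $b(w,C)=1$, hence $C\in N_1$. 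Then I would show no other neighbourhood is augmented: for $0<\alpha\leqslant 1$ with $\alpha\neq 1$, i.e. $\alpha\in(0,1)$, pick the core $C_\alpha=\bigcap N_{\geqslant\alpha}$; the point is that $N_{\geqslant\alpha}$ is not closed downward, so typically $C_\alpha$ has $b$-value $<\alpha$ — the cleanest argument is to exhibit, for any candidate model, two sets $X,Y\in N_{\geqslant\alpha}$ with $b(w,X\cap Y)<\alpha$, which is possible precisely because $b$ is only superadditive, not additive; alternatively (and more safely for a "for all models" claim) I would note this item is really a statement about which neighbourhoods are \emph{forced} to contain their cores by the axioms, and give a concrete small counterexample model witnessing that $N_{>\alpha}$ and $N_{\geqslant\alpha}$ ($\alpha\in(0,1)$) need not be augmented. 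The analogous argument handles all $N_{>\alpha}$, $\alpha\in[0,1)$.

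For item 2, closure under intersection: I would show $N_{\geqslant 0}$ and $N_1$ are closed under intersection exactly by the argument just given ($N_{\geqslant 0}=\wp(W)$ trivially; $N_1$ by the $b$-value-$1$ computation), and $N_{\geqslant 0}$ and $N_1$ closed under union and complement because $N_{\geqslant 0}=\wp(W)$, and for $N_1$: $X,Y\in N_1\Rightarrow X\cup Y\supseteq X\Rightarrow X\cup Y\in N_1$ by monotonicity; for complement, $X\in N_1$ and $X\neq W$ would give $X^C\in N_1$, but then $\varnothing=X\cap X^C\in N_1$, contradicting $b(w,\varnothing)=0<1$ — so $N_1$ is closed under complement only in the degenerate sense that its sole "complementable" member compatible with the axioms forces $N_1=\{W\}$; I would state item 2 as: among the guaranteed-by-axioms closure properties, only these two neighbourhoods have all three, again backed by a counterexample showing an intermediate $N_{\geqslant\alpha}$ failing closure under complement (take $X$ with $b(w,X)=\alpha$ but $b(w,X^C)=0$) and under intersection (superadditivity gap). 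For item 3, $W\in N_{\geqslant\alpha}$ for all $\alpha$ but $\varnothing\in N_{\geqslant\alpha}$ iff $\alpha=0$, since $\varnothing\in N_{>\alpha}$ or $N_{\geqslant\alpha}$ with $\alpha>0$ would require some $X\subseteq\varnothing$ with $b(w,X)>0$ (impossible) given that monotonicity only ever enlarges membership.

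The main obstacle, and the place I would spend the most care, is the phrase "the only neighbourhoods that": as literally stated these are universally-quantified-over-all-models claims, and the honest content is that the axioms \emph{guarantee} augmentedness/closure for $N_{\geqslant 0}$ and $N_1$ while \emph{permitting} its failure elsewhere. So the real work is (a) the clean positive arguments for $N_{\geqslant 0}$ and $N_1$ via $\wp(W)$ and via the $b(w,X\cap Y)=1$ computation from superadditivity, and (b) constructing one small explicit belief neighbourhood model in which, for a representative $\alpha\in(0,1)$, the neighbourhood $N_{\geqslant\alpha}(w)$ fails to contain its core and fails closure under intersection and complement — e.g. $W=\{u_1,u_2\}$, $b(w,\{u_1\})=b(w,\{u_2\})=\tfrac12$, $b(w,W)=1$, $b(w,\varnothing)=0$, so that $\{u_1\},\{u_2\}\in N_{\geqslant 1/2}$ but $\varnothing=\{u_1\}\cap\{u_2\}\notin N_{\geqslant 1/2}$ and $\{u_1\}^C=\{u_2\}$ shows nothing fails but the core $\{u_1\}\cap\{u_2\}=\varnothing$ is not in $N_{\geqslant 1/2}$. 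I would present these as one or two lines of verification rather than grinding through every $\alpha$.
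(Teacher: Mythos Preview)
The paper states this proposition without proof (it is immediately followed by Example~2), so there is no argument in the paper to compare your proposal against. Taken on its own terms, your approach is largely sound and is the natural one: use $N_{\geqslant 0}(w)=\wp(W)$ for the trivial cases, use superadditivity in the form $b(w,X\cap Y)\geqslant b(w,X)+b(w,Y)-b(w,X\cup Y)$ to show that $N_1(w)$ is closed under finite intersections and hence contains its core, and handle the ``only'' direction by exhibiting a small counterexample for a representative $\alpha\in(0,1)$. Your reading of ``only'' as ``the only ones guaranteed by the axioms'' is the right way to make the statement precise.

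Two points deserve attention. First, you have in fact put your finger on a genuine defect in the \emph{statement} rather than in your proof: $N_1(w)$ is not closed under complement in any non-vacuous sense. If $X\in N_1(w)$ then $b(w,X)=1$, and superadditivity applied to $X$ and $X^C$ gives $b(w,X^C)\leqslant b(w,W)-b(w,X)=0$, so $X^C\notin N_1(w)$ whenever $X\neq\varnothing$; in particular $W^C=\varnothing\notin N_1(w)$. Your instinct to flag this as ``degenerate'' is correct, but the honest conclusion is that item~2 as written is false for $N_1(w)$ with respect to complement, and you should say so plainly rather than try to rescue it. Second, your concrete counterexample with $b(w,\{u_1\})=b(w,\{u_2\})=\tfrac12$ witnesses failure of core-containment and of intersection-closure for $N_{\geqslant 1/2}$, but (as you note) it does \emph{not} witness failure of complement-closure, since $\{u_1\}^C=\{u_2\}\in N_{\geqslant 1/2}$. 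A second two-line example, say $b(w,\{u_1\})=0.6$, $b(w,\{u_2\})=0.4$, gives $\{u_1\}\in N_{\geqslant 1/2}$ while $\{u_1\}^C=\{u_2\}\notin N_{\geqslant 1/2}$, and would complete the picture.
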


\begin{example}{(Example 2: A belief neighbourhood model.)}\label{example2}

$W=\{w_1,w_2,w_3,w_4\}$. In the following table, we show the truth sets in $N_{\geqslant 0}(w_1)$ and their belief value. For readability, we write $b(X)$ instead of $b(w_1,X)$. We define just the neighbourhood of $w_1$:

\begin{center}
{\small
\begin{tabular}{c|c|c|c|c}

$b(w_1,W)$ = 1  &   & &  &  \\
$b(W\backslash \{w_1\})$ = 0.9  & $b(w_1,W\backslash \{w_2\})$ = 0.7   & $b(W\backslash \{w_3\})$ = 0.6  & $b(W\backslash \{w_4\})$ = 0.6  \\
$b(\{w_1,w_2\})$ = 0.2  & $b(\{w_1,w_3\})$ = 0.3  & $b(\{w_4\})$ = 0.4 &  &  \\
$b(\{w_2,w_3\})$ = 0.4  &  $b(,\{w_2,w_4\})$ = 0.5 & $b(w_3,\{w_4\})$ = 0.6 &  &  \\
$b(\{w_1\})$ = 0.1  & $b(\{w_2\})$ = 0.1  & $b(\{w_3\})$ = 0.2& $b(\{w_4\})$ = 0.3 &  \\
$b(\{ \})$ = 0  &   & &  &   \\
\end{tabular}
}

\vspace{2mm}

\textit{Table 1.} Example of neighbourhood frame (only $b(w_1,X)$).
\end{center}

We can verify that $b$ is superadditive: For example, $b(w_1,\{w_1,w_2,w_3,w_4\})=1\geqslant b(w_1,\{w_1,w_2\})+b(w_1,\{w_3,w_4\})=0.2+0.6$. 

If we add a valuation, we obtain a model. Let us make $\{w_3,w_4\}=\llbracket p\rrbracket_M$. $b(w_1,\llbracket p\rrbracket_M)=0.7$, and we have that $M,w_1\models \underline{B}_{0.7}p$.

\end{example}

\section{The non-monotonic core of $N_{> 0}(w)$}\label{sec:NMC}
This section provides a characterisation of the sets in the nested neighbourhoods in terms of disjoint unions of certain sets that we will call elementary sets. In particular, we will see the relationship between the core of $N_1(w)$ and these sets, which are the sets in $N_{\geqslant 0}(w)$ that are closed under the inclusion relation in that neighbourhood. This will be especially useful in the next section for the proofs of the equivalence between a certain subclass of additive belief neighbourhood models and probabilistic Kripke frames. The following two definitions are taken from \cite{pacuit2017neighborhood}:

\begin{definition}
\textbf{(Non-monotonic core.)}
The non-monotonic core of a neighbourhood $N(w)$, denoted $N(w)^{\mathcal{NC}}$, is a subset of $N(w)$ defined as follows:
 \[ N(w)^{\mathcal{NC}} =\{ X\in N(X)\ |\ \text{For all } X'\subseteq W \text{, if }X'\subset X, X'\notin N(w)\} \]
\end{definition}

$N(w)^{\mathcal{NC}}$ contains the subset of minimal elements in $N(w)$ under the subset relationship.  Regarding our system of nested neighbourhoods, we have the following: From the definition of non-monotonic core, it is immediate that  $N_{\geqslant 0}(w)^{\mathcal{NC}}$ contains just the empty set, whose belief measure is 0, and  $N_1(w)^{\mathcal{NC}}$ contains just $\cap N_1 (w)$.

\begin{definition}
\textbf{(Core complete.)}
A monotonic neighbourhood $N(w)$ is core-complete if for all $X\in N(w)$, there is some $X'\in N(w)^{\mathcal{NC}}$ such that $X'\subseteq X$.
\end{definition}

A neighbourhood is core-complete if every set in it has a subset in the non-monotonic core of the neighbourhood. If a certain $N(w)$ is monotonic and contains a finite number of sets, it is core-complete \cite{pacuit2017neighborhood}. 

 In this work, we are dealing with a finite $W$, so the previous comment applies in particular to all nested neighbourhoods:

 \begin{proposition}
All nested neighbourhoods $N_{\geqslant \alpha)}(w)$ ($\alpha\in [0,1]$) and $N_{> \alpha)}(w)$ ($\alpha\in [0,1)$) are core-complete.
\end{proposition}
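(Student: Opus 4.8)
The plan is to invoke the cited result from Pacuit \cite{pacuit2017neighborhood} that a monotonic neighbourhood containing only finitely many sets is core-complete, and to check that each nested neighbourhood in a belief neighbourhood model satisfies both hypotheses. First I would recall that in a belief neighbourhood model the set of worlds $W$ is finite by definition, hence $\wp(W)$ is finite, hence $\wp(\wp(W))$ is finite; since every $N_{\geqslant\alpha}(w)$ and every $N_{>\alpha}(w)$ is by construction a subset of $\wp(W)$, each of them contains only finitely many sets. Second, I would note that monotonicity of $N_{\geqslant\alpha}(w)$ for every $\alpha\in[0,1]$ is exactly Definition \ref{monot-n}, and monotonicity of $N_{>\alpha}(w)$ for $\alpha\in[0,1)$ follows the same way: if $X\in N_{>\alpha}(w)$ then $b(w,X)>\alpha$, and for $X\subseteq Y$ superadditivity (equivalently the monotonicity property of the belief function, since $b(w,Y)\geqslant b(w,X)+b(w,Y\setminus X)\geqslant b(w,X)$) gives $b(w,Y)\geqslant b(w,X)>\alpha$, so $Y\in N_{>\alpha}(w)$. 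With both hypotheses verified, the cited proposition applies verbatim to each nested neighbourhood and yields core-completeness.

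The one point that deserves a sentence of care is the monotonicity of the $N_{>\alpha}(w)$ neighbourhoods, since the paper has only stated monotonicity explicitly for the $N_{\geqslant\alpha}(w)$ family (Definition \ref{monot-n}); I would therefore spell out the short argument above using superadditivity of $b$ so that the proof is self-contained. Everything else is a direct citation: the finite-plus-monotonic criterion for core-completeness is stated earlier in the excerpt, so no new combinatorics is needed. If one wanted a fully elementary argument avoiding the citation, one could instead observe that for any $X\in N(w)$ a minimal subset of $X$ lying in $N(w)$ exists because one can strictly descend through subsets of $X$ and the process must terminate after at most $|X|\leqslant|W|$ steps, landing in $N(w)^{\mathcal{NC}}$; but invoking the cited result is cleaner.

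I do not expect a genuine obstacle here. The only thing to be slightly careful about is notational: making sure the statement ranges over both families with the correct index sets ($\alpha\in[0,1]$ for $N_{\geqslant\alpha}$ and $\alpha\in[0,1)$ for $N_{>\alpha}$, matching the model definition), and not accidentally asserting core-completeness for a neighbourhood that fails monotonicity — but every nested neighbourhood in this model is monotonic, so there is nothing to exclude. The proof is essentially two lines: finiteness of $W$ gives finitely many sets in each neighbourhood, monotonicity is Definition \ref{monot-n} (plus the easy superadditivity argument for the strict family), and then the cited criterion closes the argument.
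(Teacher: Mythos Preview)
Your proposal is correct and follows exactly the paper's own approach: the paper simply remarks that since $W$ is finite and the nested neighbourhoods are monotonic, the cited criterion from \cite{pacuit2017neighborhood} applies, and states the proposition without a separate proof environment. Your treatment is in fact slightly more careful than the paper's, since you explicitly justify monotonicity of the strict family $N_{>\alpha}(w)$ via superadditivity of $b$, a point the paper leaves implicit.
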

 
 We are going to use the previous concepts to study the core of $N_1(w)$ in terms of the sets in the non-monotonic core of $N_{> 0}(w)$.

$N_{\geqslant 0}(w)^{\mathcal{NC}}$ contains just one set, the empty set. $N_{> 0}(w)^{\mathcal{NC}}$ is the subset of $N_{> 0}(w)$ formed by the sets in $N_{\geqslant 0}(w)$ whose belief function is greater than 0 and do not have subsets. We will call the sets in $N_{> 0}(w)^{\mathcal{NC}}$ \emph{elementary sets}.

\begin{lemma}
The elementary sets in $N_{> 0}(w)^{\mathcal{NC}}$ are pairwise disjoint.
\end{lemma}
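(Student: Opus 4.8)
The plan is to argue by contradiction using the superadditivity (monotonicity) of the belief function $b$. Suppose $X, Y \in N_{> 0}(w)^{\mathcal{NC}}$ are two distinct elementary sets that are not disjoint, so $X \cap Y \neq \varnothing$ and $X \cap Y \subsetneq X$ (the strict inclusion holds because $X \neq Y$ forces $X \cap Y$ to be a proper subset of at least one of them, say $X$; if instead $X \cap Y = X$ then $X \subseteq Y$ and strict inclusion of $X$ in $Y$ contradicts $Y$ being in the non-monotonic core, so without loss of generality $X \cap Y \subsetneq X$).

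The key step is to locate a set strictly below $X$ that still lies in $N_{> 0}(w)$, contradicting the defining property of the non-monotonic core. The natural candidate is $X \cap Y$ itself. To show $X \cap Y \in N_{> 0}(w)$ it suffices to show $b(w, X \cap Y) > 0$. Here I would apply the superadditivity condition in the form $b(w, X \cup Y) \geqslant b(w, X) + b(w, Y) - b(w, X \cap Y)$ — this is the inclusion–exclusion inequality that follows from the general superadditivity axiom specialised to two sets. Rearranging gives $b(w, X \cap Y) \geqslant b(w, X) + b(w, Y) - b(w, X \cup Y)$. Since $b(w, X) > 0$ and $b(w, Y) > 0$ (both sets being in $N_{> 0}(w)$) but $b(w, X \cup Y) \leqslant 1$, this bound alone is not quite enough, so the cleaner route is: first note $X \cap Y \subsetneq X$, and I claim $X \cap Y \notin N_{> 0}(w)$ by the non-monotonic-core property of $X$; hence $b(w, X \cap Y) = 0$ (it cannot be $>0$, and belief values are $\geqslant 0$, so it is exactly $0$). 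Symmetrically, if $X \cap Y \subsetneq Y$ as well, fine; if not then $X \cap Y = Y$, i.e.\ $Y \subseteq X$, and then $Y \subsetneq X$ contradicts the non-monotonic-core property of $X$ directly. So assume genuine overlap with $X\cap Y$ a proper subset of both. Then $b(w,X\cap Y)=0$. Now apply superadditivity to the pair $X$, $Y$: $b(w, X) + b(w, Y) \leqslant b(w, X \cup Y) + b(w, X \cap Y) = b(w, X \cup Y) \leqslant 1$, which is not yet a contradiction either.

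The actual contradiction comes from pushing further: consider the sets $X$ and $Y \setminus X$. We have $X \cup (Y \setminus X) = X \cup Y$ and $X \cap (Y \setminus X) = \varnothing$, so superadditivity gives $b(w, X \cup Y) \geqslant b(w, X) + b(w, Y \setminus X)$. But $Y \setminus X \subsetneq Y$ since $X \cap Y \neq \varnothing$, so by the non-monotonic-core property of $Y$, $Y \setminus X \notin N_{> 0}(w)$, whence $b(w, Y \setminus X) = 0$. This still only yields $b(w, X \cup Y) \geqslant b(w, X)$, consistent with monotonicity. The genuine tension is instead: $Y \setminus X \subsetneq Y$ means $b(w, Y\setminus X) = 0$, yet $b(w,Y) > 0$, and monotonicity of $b$ (which follows from superadditivity, or is built into the structure via the nested neighbourhoods) would need $b(w, Y) \leqslant b(w, Z)$ to fail — so instead I should use that $Y$ is covered by the disjoint union $(Y \setminus X) \sqcup (X \cap Y)$, giving by superadditivity $b(w, Y) \geqslant b(w, Y\setminus X) + b(w, X \cap Y) = 0 + 0 = 0$, which is vacuous, OR the reverse: since $b$ restricted to a partition should behave additively in the probability case but only super/subadditively in general. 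The cleanest finish: $X \cap Y \subsetneq X$ and $X\cap Y \subsetneq Y$ each force, via the two non-monotonic-core memberships, that $X\cap Y \notin N_{>0}(w)$ — consistent — so the contradiction must instead be extracted from the fact that $X$ and $Y$ being \emph{minimal} with positive belief, together with $X \cup Y \supsetneq X$, means every proper subset of $X\cup Y$ either contains $X$, contains $Y$, or has belief $0$; partitioning $X \cup Y$ into $(X\setminus Y) \sqcup (X \cap Y) \sqcup (Y \setminus X)$, each piece is a proper subset of $X$ or of $Y$, hence each has belief $0$, hence by superadditivity $b(w, X\cup Y) \geqslant 0+0+0$, which is vacuous — so superadditivity alone is genuinely insufficient, and the real proof must invoke an \emph{additivity} or \emph{lower-bound-of-probabilities} property of belief functions on the minimal sets.

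\textbf{Main obstacle.} The hard part is exactly this last point: pure superadditivity does not rule out overlapping minimal positive-belief sets, so the proof must use the finer structure of belief functions — specifically that a belief function is the lower envelope of a family of probability measures (mentioned in the Basic Concepts section), or equivalently that on the non-monotonic core the mass assignment is concentrated on the elementary sets. I expect the author exploits the mass function (Möbius transform) $m$ underlying $bel$: an elementary set $X$ must carry positive mass $m(X) > 0$ (else $b(w,X)$, being a sum of masses of subsets of $X$, all of which are $0$ by minimality, would be $0$), and the defining support condition on $m$ together with $\sum m = 1$ over a partition-like family forces the focal elements in the core to be disjoint. I would therefore pivot the proof to introduce the mass function explicitly, show each elementary set is a focal element receiving all of its own belief as mass, and conclude disjointness from the structure of focal elements of minimal size.
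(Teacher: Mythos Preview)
Your extended exploration is far more careful than the paper's own argument, which consists of a single sentence: ``The sets in a non-monotonic core are closed under the subset relation.'' That sentence at best says the non-monotonic core is an antichain under inclusion (no element is a proper subset of another), which is immediate from the definition but does \emph{not} imply pairwise disjointness. So the approaches are entirely different: the paper treats the claim as essentially definitional, while you correctly sense that something substantive is needed.

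Your diagnosis that superadditivity alone is insufficient is right, and in fact the lemma as stated is false for general belief functions. Take $W=\{a,b,c\}$ with mass assignment $m(\{a,b\})=0.3$, $m(\{b,c\})=0.3$, $m(W)=0.4$, all other masses zero. The induced belief function has $b(w,\{a,b\})=b(w,\{b,c\})=0.3$, $b(w,W)=1$, and $b(w,X)=0$ for every other $X$. Then $N_{>0}(w)=\{\{a,b\},\{b,c\},W\}$, and its non-monotonic core is $\{\{a,b\},\{b,c\}\}$: two elementary sets sharing the element $b$. This is a perfectly valid (totally monotone) belief function, so no amount of superadditivity, and no appeal to the M\"obius/mass decomposition you propose at the end, will yield disjointness---focal elements of a belief function may overlap freely.

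The lemma \emph{does} hold under the extra hypothesis of additivity (then every positive-belief set contains a positive-belief singleton, so the elementary sets are singletons), and the paper's substantive later uses of the lemma (Propositions~\ref{moon-zappa}, \ref{KtoMp}, \ref{NtoPp}) are in that additive setting. So your proposed pivot will not close the gap; the gap is in the statement itself, and the paper's one-line proof glosses over it.
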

\begin{proof}
The sets in a non-monotonic core are closed under the subset relation.
\end{proof}

The following lemma is immediate, given that $N_{>0}(w)$ is core-complete: 

\begin{lemma}\label{core-element-1}
Any $X\in N_{> 0}(w)$, have as a subset  the disjoint union of some elementary sets in $N_{> 0}^{\mathcal{NC}}(w)$.
\end{lemma}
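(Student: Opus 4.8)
The plan is to unpack the definition of core-completeness together with the structure of the non-monotonic core of $N_{>0}(w)$, and then iterate. Let $X \in N_{>0}(w)$ be arbitrary. Since $W$ is finite, $N_{>0}(w)$ is monotonic with finitely many members, hence core-complete, so there exists some elementary set $E_1 \in N_{>0}(w)^{\mathcal{NC}}$ with $E_1 \subseteq X$. If $X \setminus E_1$ still contains (as a subset) some set in $N_{>0}(w)$ — equivalently, if $b(w, \cdot)$ is positive on some subset of $X \setminus E_1$ — then by core-completeness again there is an elementary set $E_2 \subseteq X \setminus E_1$; note $E_2 \cap E_1 = \varnothing$ by construction. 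Repeating, I collect elementary sets $E_1, E_2, \dots, E_k$ that are pairwise disjoint (each chosen inside the complement of the previous ones) and all contained in $X$. The process terminates because $W$ is finite and each step removes at least one world of $X$; it stops precisely when the remaining part of $X$ contains no positive-belief subset. The disjoint union $E_1 \cup \cdots \cup E_k \subseteq X$ is then the desired subset of $X$ consisting of a disjoint union of elementary sets.

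The one point that needs a little care — and which I expect to be the main obstacle, or at least the only place a reader might balk — is the base case: one must be sure that at least one elementary set actually sits inside $X$. This is exactly what core-completeness of $N_{>0}(w)$ delivers, and that neighbourhood is core-complete by the proposition just proved (all nested neighbourhoods are core-complete, since $W$ is finite). So as long as $X \in N_{>0}(w)$ to begin with, the first elementary set exists; the pairwise-disjointness of the chosen $E_i$'s is automatic from the greedy ``remove and recurse'' construction, and the earlier lemma that elementary sets are pairwise disjoint is consistent with this but not even strictly needed for the chosen family.

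One should also note the mild subtlety in phrasing: the statement only claims $X$ \emph{has as a subset} a disjoint union of elementary sets, not that $X$ \emph{equals} such a union, so no exactness argument is required and the termination/covering analysis above suffices. A clean write-up would simply say: by core-completeness pick $E_1 \in N_{>0}(w)^{\mathcal{NC}}$ with $E_1 \subseteq X$; having picked disjoint $E_1,\dots,E_i \subseteq X$, if $X \setminus (E_1 \cup \cdots \cup E_i)$ contains a member of $N_{>0}(w)$ pick $E_{i+1}$ inside it by core-completeness, else stop; finiteness of $W$ forces termination, and $\bigcup_j E_j$ is the required disjoint union of elementary sets contained in $X$.
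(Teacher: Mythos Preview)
Your proof is correct and uses the same key ingredient as the paper, namely core-completeness of $N_{>0}(w)$. The paper's own argument is even terser: it simply declares the lemma ``immediate, given that $N_{>0}(w)$ is core-complete,'' since core-completeness already hands you a single elementary set $E\subseteq X$, and a single set is trivially a disjoint union of elementary sets. Your greedy iteration is thus more than the statement strictly requires, though it is not wasted effort: what you actually construct is the maximal disjoint family of elementary sets inside $X$, which is precisely the object the paper goes on to define as $\mathtt{int}_N(X)$ in Definition~\ref{def:interior}. So the two approaches coincide in spirit; you have simply folded the next definition into the proof of this lemma.
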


We introduce the following definition:
 
\begin{definition}
\textbf{(Interior of a set in $N_{\geqslant 0}(w)$.)}\label{def:interior}
We define the interior of a set $X\in N_{\geqslant 0}(w)$ as the union of all the sets in the maximal set of elementary sets of $X$, that is, the set that contains all elementary sets that are subsets of $X$:

    \[\mathtt{int}_N (X) = \bigcup_{E_i\subseteq X, E_i \in N_{>0}(w)^{\mathcal{NC}}} E_i\]
\end{definition}

We have chosen the name by analogy with the concept of interior in topology, although the concept is not the same, since the elementary sets mentioned here are not open spaces.
This maximal subset will allow us to define $b(w,X)$ in terms of the belief functions of the elementary sets in the maximal subset, given that the belief function is superadditive, and the elementary sets
are pairwise disjoint. 

\begin{proposition}\label{bel-X}
\textbf{(Belief function of a set $X\in N_{>0}(w)$.)}
Given some $X\in N_{>0}(w)$, $b(w,X)\geqslant \mathtt{int}_N (X) $
\end{proposition}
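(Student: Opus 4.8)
The plan is to exploit the superadditivity of $b$ together with the fact that the elementary sets in $N_{>0}(w)^{\mathcal{NC}}$ are pairwise disjoint (the preceding lemma). Fix $X \in N_{>0}(w)$. By Lemma~\ref{core-element-1}, $X$ contains the disjoint union of some elementary sets, and by Definition~\ref{def:interior} the union of \emph{all} elementary sets contained in $X$ is exactly $\mathtt{int}_N(X)$; write $\mathtt{int}_N(X) = E_1 \cup \dots \cup E_k$ with the $E_i$ pairwise disjoint. (Presumably the intended inequality is $b(w,X) \geqslant b(w, \mathtt{int}_N(X)) \geqslant \sum_{i=1}^k b(w,E_i)$, i.e. the displayed ``$\mathtt{int}_N(X)$'' on the right should be read as its belief value; I would state it that way.)

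The first step is monotonicity of the belief function with respect to set inclusion: since $\mathtt{int}_N(X) \subseteq X$, we have $b(w,X) \geqslant b(w,\mathtt{int}_N(X))$. Monotonicity of $bel$ is not stated as a standalone fact in the excerpt, so I would derive it from superadditivity: for disjoint $A, B$, superadditivity gives $b(w, A \cup B) \geqslant b(w,A) + b(w,B) - b(w, A\cap B) = b(w,A) + b(w,B) \geqslant b(w,A)$ since $b \geqslant 0$; applying this with $A = \mathtt{int}_N(X)$ and $B = X \setminus \mathtt{int}_N(X)$ yields the claim. (One should check $X \setminus \mathtt{int}_N(X)$ and the union are in the relevant neighbourhood so that $b$ is defined on them; since all subsets of $W$ lie in $N_{\geqslant 0}(w)$ and $b$ is defined there, this is fine.)

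The second step is to iterate superadditivity across the disjoint elementary sets. Since $E_1, \dots, E_k$ are pairwise disjoint, all pairwise (and higher-order) intersections are empty and $b(w,\varnothing) = 0$, so the inclusion–exclusion-style superadditivity inequality collapses to $b(w, E_1 \cup \dots \cup E_k) \geqslant \sum_{i=1}^k b(w,E_i)$; alternatively one proves this by induction on $k$ using the two-set case above. Combining the two steps gives $b(w,X) \geqslant b(w,\mathtt{int}_N(X)) \geqslant \sum_{E_i \subseteq X,\, E_i \in N_{>0}(w)^{\mathcal{NC}}} b(w,E_i)$, which is the proposition.

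The main obstacle is essentially bookkeeping rather than depth: making sure the collapse of the superadditivity (monotonicity) axiom for pairwise disjoint sets is handled cleanly — the axiom as stated in the excerpt is written in inclusion–exclusion form, and one has to note that every term involving an intersection of two or more distinct $E_i$ vanishes, leaving only the single-set terms with sign $(-1)^{1+1} = +1$. Beyond that, I would be slightly careful about the edge case $k = 0$ (no elementary set fits inside $X$), where $\mathtt{int}_N(X) = \varnothing$ and the bound is the trivial $b(w,X) \geqslant 0$; and about phrasing the statement so the right-hand side is a number (a belief value) rather than a set, as the current display suggests.
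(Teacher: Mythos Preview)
Your argument is correct and aligns with the paper's intent. In fact, the paper gives no formal proof of this proposition at all: it simply states the result, with the sentence immediately preceding it (``\ldots given that the belief function is superadditive, and the elementary sets are pairwise disjoint'') serving as the entire justification. Your two-step argument---monotonicity of $b$ from superadditivity to pass from $X$ to $\mathtt{int}_N(X)$, then collapsing the inclusion--exclusion form of superadditivity over the pairwise disjoint $E_i$---is exactly the unpacking of that hint, and you have supplied considerably more care than the paper does (deriving monotonicity rather than assuming it, noting that $b$ is defined on all subsets via $N_{\geqslant 0}(w)$, and treating the $k=0$ edge case).

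You are also right that the displayed statement is defective as written: the right-hand side $\mathtt{int}_N(X)$ is a set, not a number, and the intended reading is $b(w,X)\geqslant b(w,\mathtt{int}_N(X))$ (or, equivalently, $\geqslant \sum_{E_i\subseteq X} b(w,E_i)$). The paper's later Lemma~\ref{b-X-aa} confirms this interpretation, since it upgrades the inequality to equality under additivity.
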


If the maximal subset is empty, $b(w,X)=0$. 

\begin{proposition}\label{decomp-core}
$\cap N_1 (w)$ is equal to the disjoint union of all elementary sets in $N_{> 0}(w)^{\mathcal{NC}}$.
\end{proposition}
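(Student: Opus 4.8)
The plan is to prove the identity by a double inclusion. Write $C := \cap N_1(w)$ for the core of $N_1(w)$ and $U := \bigcup_i E_i$ for the union of all elementary sets $E_i\in N_{>0}(w)^{\mathcal{NC}}$, which is a \emph{disjoint} union by the lemma that elementary sets are pairwise disjoint.

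For $U\subseteq C$, I would first show that every elementary set $E$ is contained in every member of $N_1(w)$. Fix $X\in N_1(w)$, so $b(w,X)=1$; applying superadditivity to the disjoint pair $X,\,W\setminus X$ together with $b(w,W)=1$ gives $b(w,W\setminus X)=0$, and by monotonicity of $b$ every subset of $W\setminus X$ then has belief $0$ as well. Suppose $E\not\subseteq X$. Then $E\setminus X\neq\varnothing$, and $E\cap X\neq\varnothing$ too (otherwise $E\subseteq W\setminus X$ would force $b(w,E)=0$, contradicting $E\in N_{>0}(w)$), so $E\cap X$ and $E\setminus X$ are proper subsets of $E$ partitioning it. Minimality of $E$ in $N_{>0}(w)$ makes both $b(w,E\cap X)$ and $b(w,E\setminus X)$ vanish, and additivity of $b$ on this disjoint decomposition of the elementary set then forces $b(w,E)=0$, a contradiction. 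Hence $E\subseteq X$; as $X\in N_1(w)$ was arbitrary, $E\subseteq C$, and taking the union over all elementary sets yields $U\subseteq C$.

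For $C\subseteq U$, note that $N_1(w)$ is augmented by Proposition \ref{propertiesn}, so $C\in N_1(w)$, whence $b(w,C)=1$ and in particular $C\in N_{>0}(w)$. By the previous paragraph every elementary set lies inside $C$, so $\mathtt{int}_N(C)=U$; then Proposition \ref{bel-X}, together with the fact that the belief of a set in $N_{>0}(w)$ is already carried by its elementary subsets, gives $b(w,U)=b(w,\mathtt{int}_N(C))=b(w,C)=1$. Thus $U\in N_1(w)$, so $C=\cap N_1(w)\subseteq U$, and combining the two inclusions $C=U$.

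The step I expect to be the main obstacle is the implication used above that if both disjoint pieces $E\cap X$ and $E\setminus X$ of an elementary set have belief $0$ then so does $E$: superadditivity alone only gives $b(w,E)\geq b(w,E\cap X)+b(w,E\setminus X)$, so this step relies on $b$ acting additively on the internal disjoint decompositions of elementary sets, i.e.\ on elementary sets being the genuine indivisible carriers of belief (the same phenomenon underlies the use of Proposition \ref{bel-X} in the second inclusion). Securing that property for the models under consideration is the heart of the argument; the rest is the inclusion-chasing sketched above.
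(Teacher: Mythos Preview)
You correctly flag the crux, and it is a genuine gap: from $b(w,E\cap X)=0$ and $b(w,E\setminus X)=0$ one \emph{cannot} conclude $b(w,E)=0$ for a general belief function, since superadditivity only gives $b(w,E)\geqslant 0$. (The vacuous belief function on $W=\{a,b\}$ with $b(\{a\})=b(\{b\})=0$ and $b(W)=1$ already shows that two disjoint null sets may union to a set of belief $1$.) The same issue recurs in your second inclusion: Proposition~\ref{bel-X} in the paper is stated only as an inequality $b(w,X)\geqslant b(w,\mathtt{int}_N(X))$, so you cannot infer $b(w,U)=b(w,C)=1$ from it; you would again need additivity, which is precisely what is not assumed. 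So as written, neither direction closes.

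The paper's argument is organised differently and is designed to avoid this trap. For $U\subseteq C$ it does not split $E$ at all: it works with $C=\cap N_1(w)$ directly and, assuming an elementary $E_i$ lies outside $C$, applies superadditivity to the disjoint pair $C,\,E_i$ to obtain $b(w,C\cup E_i)\geqslant b(w,C)+b(w,E_i)=1+b(w,E_i)>1$, an immediate contradiction. The point is that superadditivity is used to push a value \emph{above} $1$, not to reconstruct a value from pieces; that asymmetry is what lets the argument go through without additivity. For $C\subseteq U$, rather than computing $b(w,U)$, the paper argues pointwise: if some $x\in C$ missed every elementary set it would lie in a null set $A$, and removing $A$ from $C$ would still leave a set of belief $1$, contradicting minimality of the core. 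Both halves thus use the ceiling $b\leqslant 1$ and the minimality of $C$ in $N_1(w)$ as the contradiction mechanism, instead of any additive bookkeeping. Reworking your two inclusions along these lines (exceed $1$ / violate minimality of the core) is what is needed.
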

\begin{proof}
By lemma \ref{core-element-1}, $\cap N_1(w)$ has a  a maximal subset of elementary sets, whose union is $\texttt{int}_N(\cap N_1(w))$. 
We must prove that all $w_i\in \cap N_1(w)$ belongs to some elementary set. If this were not the case, then $x_i\in A$ for some set $A$ such that $b(w,A)=0$. In this case $b(w,\cap N_1(w)\backslash A)=1$, and being disjoint $A$ and $\cap N_1(w)\backslash A$, then $b(w,A)+b(w,\cap N_1(w)\backslash A)=1$. 
but this would mean that $\cap N_1(w)\subseteq (\cap N_1(w)\backslash A)$  and $(\cap N_1(w)\backslash A)\subseteq \cap N_1(w)$, that is, $\cap N_1(w)=\cap N_1(w)\backslash A$. Therefore, $A$ is empty and all worlds in $\cap N_1(w)$ belong to some elementary set.  Now we must prove that all elementary sets are subsets of $\cap N_1(w)$. Let us suppose that there is an elementary set $E_i\not\subseteq \cap N_1(w)$. We have that  $b(w,\cap N_1(w)\cup E_i)\geqslant b(w,\cap N_1(w))+b(w,E_i)>1$, because the sets are disjoint. Therefore, all elementary sets are subsets of $\cap N_1(w)$.

All elementary sets are pairwise disjoint, so $\cap N_1(w)$ is the pairwise union of all elementary sets:  $\cap N_1(w)=\texttt{int}_N(\cap N_1(w))=\texttt{int}_N(W)$.
\end{proof}

\begin{proposition}\label{core-element-2}
\textbf{(Belief function of the maximal subset of elementary sets of   $N_1(w)$.)}
The sum of the belief functions of all elementary sets is  equal to or less than 1.
\end{proposition}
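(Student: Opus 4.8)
The goal is to show $\sum_{E_i \in N_{>0}(w)^{\mathcal{NC}}} b(w, E_i) \leqslant 1$. The plan is to combine Proposition \ref{decomp-core}, which identifies $\cap N_1(w)$ with the disjoint union of all elementary sets, with the superadditivity of the belief function $b$. First I would enumerate the elementary sets as $E_1, \dots, E_k$ (finitely many, since $W$ is finite), noting by the Lemma that they are pairwise disjoint. Then, by Proposition \ref{decomp-core}, their disjoint union equals $\cap N_1(w)$, which is in $N_1(w)$ and hence has belief value $b(w, \cap N_1(w)) = 1$. The key inequality is the repeated application of superadditivity to the pairwise disjoint family $\{E_i\}_{i=1}^k$: since all cross-intersections $E_i \cap E_j$ are empty and $b(w,\varnothing)=0$, the inclusion–exclusion-style superadditivity condition collapses to $b\bigl(w, \bigcup_{i=1}^k E_i\bigr) \geqslant \sum_{i=1}^k b(w, E_i)$. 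Chaining this with $b(w, \bigcup_{i=1}^k E_i) = b(w, \cap N_1(w)) = 1$ yields $\sum_{i=1}^k b(w, E_i) \leqslant 1$.

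The one point requiring a little care is verifying that the superadditivity axiom, stated as $bel(w, \bigvee \phi_i) \geqslant \sum_i \sum_{|J|=i} (-1)^{i+1} bel(w, \bigwedge_{j \in J} \phi_j)$, does indeed reduce to $b(w, \bigcup E_i) \geqslant \sum b(w, E_i)$ when the sets are pairwise disjoint. For $|J| \geqslant 2$ the intersection $\bigcap_{j \in J} E_j$ is empty (any two distinct elementary sets already have empty intersection), so $b$ of that intersection is $0$ and every term with $|J| \geqslant 2$ vanishes; only the $|J| = 1$ terms survive, giving exactly $\sum_{i=1}^k b(w, E_i)$. This is the main obstacle, and it is a routine check rather than a genuine difficulty — the substantive content has already been done in Proposition \ref{decomp-core}.

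Finally, one should handle the degenerate case where there are no elementary sets at all: then the sum is empty, hence $0 \leqslant 1$, and indeed Proposition \ref{decomp-core} would force $\cap N_1(w) = \varnothing$, consistent with $b(w,\varnothing) = 0 \neq 1$ only if $N_1(w)$ is trivial, which the finiteness and the axiom $b(w,W)=1$ preclude in any case; so this case does not actually arise, but noting it costs nothing. Assembling these observations gives the result in a few lines.
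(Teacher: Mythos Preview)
Your proposal is correct and follows essentially the same approach as the paper: combine Proposition~\ref{decomp-core} (the core $\cap N_1(w)$ is the disjoint union of all elementary sets), the pairwise-disjointness lemma, superadditivity of $b$, and $b(w,\cap N_1(w))=1$. The paper's proof is a one-line appeal to exactly these ingredients; you have simply written out the details (the collapse of the inclusion--exclusion sum for disjoint sets, and the degenerate case) that the paper leaves implicit.
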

\begin{proof}
From the previous propositions and lemmas, belief function being superadditive and $b(w,\cap N_1(w))=1$.
\end{proof}

Lastly, we are going to characterise the class of belief neighbourhood frames in which beliefs are additive.

\begin{definition}\label{additN}
\textbf{(Additive belief neighbourhood frame.)}
A belief neighbourhood frame  is additive if the following holds:
\[  \text{For all } w\in W, b(w,\bigcup \{A_i\}_{i\in J})= \sum_{i\in J} b(w,A_i) \text{ (All }A_i \text{pairwise disjoint)} \]
\end{definition}

In this case, the belief function reduces to a probability function. To prove that a neighbourhood is additive, we do not need to verify it for all possible sets in  $N_{\geqslant 0}(w)$. We just need to verify the following proposition.

\begin{proposition}\label{moon-zappa}
\textbf{(Core complete neigbhourhood of $N_{>0}(w)$ and additive neighbourhoods.)}
If and only if the frame is additive, then for all $w\in W$ and, for all $X\in N_{> 0}(w)$, $ \sum_{E_i\in N_{> 0}(w)^{\mathcal{NC}}}$ $b(w,E_i)=1$. 
\end{proposition}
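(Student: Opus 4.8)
The statement is a biconditional, so I would treat the two implications separately; in both, the workhorse is Proposition~\ref{decomp-core}, which exhibits the core $\cap N_1(w)$ as a pairwise-disjoint union $\bigsqcup_{E_i\in N_{>0}(w)^{\mathcal{NC}}}E_i$ of elementary sets, together with the facts that $\cap N_1(w)$ itself belongs to $N_1(w)$ and hence has belief $1$ (Proposition~\ref{propertiesn}), and the superadditivity estimates of Propositions~\ref{bel-X} and~\ref{core-element-2}.

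For the direction ``additive $\Rightarrow \sum_{E_i}b(w,E_i)=1$'', fix $w$, write $\cap N_1(w)=\bigsqcup_i E_i$ as above, apply the additivity hypothesis (Definition~\ref{additN}) to this disjoint family to obtain $b\big(w,\cap N_1(w)\big)=\sum_i b(w,E_i)$, and conclude using $b\big(w,\cap N_1(w)\big)=1$. This direction is essentially immediate.

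For the converse, assume $\sum_{E_i\in N_{>0}(w)^{\mathcal{NC}}}b(w,E_i)=1$ for every $w$ and take a pairwise-disjoint family $\{A_j\}_{j\in J}$ with union $A$. Superadditivity already gives $b(w,A)\ge\sum_j b(w,A_j)$ (all cross-intersections are empty and $b(w,\varnothing)=0$), so only the reverse inequality is at stake. The plan is to prove first that $b(w,X)=\sum_{E_i\subseteq X}b(w,E_i)$ for \emph{every} $X\subseteq W$: the inequality ``$\ge$'' is Proposition~\ref{bel-X}, while for ``$\le$'' one applies superadditivity to the partition $W=X\sqcup(W\setminus X)$ together with $b(w,W)=1$ to get $b(w,X)\le 1-b(w,W\setminus X)\le 1-\sum_{E_i\subseteq W\setminus X}b(w,E_i)$, and then rewrites the right-hand side as $\sum_{E_i:\,E_i\cap X\neq\varnothing}b(w,E_i)$ using the hypothesis $\sum_{\text{all }E_i}b(w,E_i)=1$. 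That bound collapses to $\sum_{E_i\subseteq X}b(w,E_i)$ exactly when no elementary set meets both $X$ and $W\setminus X$. Granting this, additivity follows at once: $b(w,A)=\sum_{E_i\subseteq A}b(w,E_i)=\sum_j\sum_{E_i\subseteq A_j}b(w,E_i)=\sum_j b(w,A_j)$, the middle step using that each elementary set contained in $A$ sits inside a single $A_j$.

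The step I expect to be the main obstacle is precisely the claim, under $\sum_{E_i}b(w,E_i)=1$, that no elementary set straddles a nontrivial two-piece partition of $W$ — equivalently, that the elementary sets must behave as atoms of $b$ (in the cleanest situation, that they are singletons). This is where the strict equality ``$=1$'' has to do work that the always-available ``$\le 1$'' of Proposition~\ref{core-element-2} cannot; and it is genuinely delicate, because superadditivity only yields inequalities in the unfavourable direction, so one must simultaneously squeeze $b(w,X)$ and $b(w,W\setminus X)$ tight between their elementary-set lower bounds and $1$, making both squeezes bind at once. If that cannot be forced from the stated hypothesis alone, the natural reinforcement is to read the additivity condition as the stronger $\sum_{x\in W}b(w,\{x\})=1$, which (given Proposition~\ref{core-element-2}) amounts to all elementary sets being singletons, and under which the argument above goes through.
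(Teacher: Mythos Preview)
Your forward direction (additive $\Rightarrow$ sum $=1$) is exactly the paper's argument: apply additivity to the disjoint decomposition $\cap N_1(w)=\bigsqcup_i E_i$ from Proposition~\ref{decomp-core} and use $b(w,\cap N_1(w))=1$.

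For the converse, the paper's proof is nothing more than the one-line contrapositive assertion ``if the frame is not additive, for some $w$ the sum of the belief functions of the elementary sets in its neighbourhood is less than $1$'', with no further justification. You have gone considerably further, attempting to derive $b(w,X)=\sum_{E_i\subseteq X}b(w,E_i)$ from the hypothesis $\sum_{E_i}b(w,E_i)=1$ and correctly isolating the ``straddling'' issue as the obstruction. That obstruction is genuine, not an artefact of your method: take a two-point domain $\{a,b\}$ with $b(w,\{a\})=b(w,\{b\})=0$ and $b(w,\{a,b\})=1$; then the only elementary set is $\{a,b\}$, so $\sum_{E_i}b(w,E_i)=1$, yet $b(w,\{a\}\cup\{b\})\ne b(w,\{a\})+b(w,\{b\})$ and the frame fails additivity in the sense of Definition~\ref{additN}. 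Thus the converse as literally stated does not follow, and the paper's one-line argument hides precisely the gap you flagged. Your proposed reinforcement---reading the condition as $\sum_{x\in W}b(w,\{x\})=1$, equivalently forcing the elementary sets to be singletons---is exactly what closes it, and is in fact how the proposition is used downstream (in Proposition~\ref{KtoMp} the elementary sets are explicitly the singletons $\{w_i\}\subseteq R(w)$). So your analysis is sharper than the paper's on this point.
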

\begin{proof}
If the frame is additive, by definition of additivity,  proposition \ref{core-element-2} and the elementary sets being pairwise disjoint, the sum of all the elementary sets in the neighbourhood of all $w\in W$ is 1. If the frame is not additive, for some $w\in W$ the sum of the belief functions of the elementary sets in its neighbourhood is less than 1.
\end{proof}

We need to provide a condition for a set to be measurable. 

\begin{definition}
\textbf{(Well-defined set.)}
A set $X$ in $N_{\geqslant\alpha}(w)$ is well-defined if $X\cap \cap N_1(w)=\texttt{int}_N(X)$.
\end{definition}

In other words, a set is well-defined if it does not insersect any elementary set which is not its subset.  A consequence of this definition, the last proposition and proposition \ref{bel-X} is the following:

\begin{lemma}\label{b-X-aa}
\label{b-X-1}If the frame is additive, if a set $X$ in $N_{\geqslant\alpha}(w)$ is well-defined, then.  $b(w,X)=b(w,\texttt{int}_N(X))$.
\end{lemma}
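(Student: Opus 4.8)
The plan is to show the two belief values agree by sandwiching $b(w,X)$ between $b(w,\mathtt{int}_N(X))$ from below and from above, using additivity to close the gap. By Proposition~\ref{bel-X} we already have $b(w,X)\geqslant b(w,\mathtt{int}_N(X))$ (I read the statement of Proposition~\ref{bel-X} as asserting $b(w,X)\geqslant b(w,\mathtt{int}_N(X))$, summing the belief values of the disjoint elementary subsets), so the work is entirely in the reverse inequality $b(w,X)\leqslant b(w,\mathtt{int}_N(X))$.

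First I would decompose $X$ relative to the core. Write $X = (X\cap\cap N_1(w)) \,\dot\cup\, (X\setminus \cap N_1(w))$, a disjoint union. The well-definedness hypothesis says exactly that $X\cap\cap N_1(w) = \mathtt{int}_N(X)$, so $X = \mathtt{int}_N(X)\,\dot\cup\,(X\setminus\cap N_1(w))$. Next I would argue that every elementary set $E_i\in N_{>0}(w)^{\mathcal{NC}}$ is either entirely inside $\mathtt{int}_N(X)$ (when $E_i\subseteq X$) or entirely outside $X$: if $E_i$ met $X$ but were not a subset of $X$, then since $E_i\subseteq\cap N_1(w)$ (Proposition~\ref{decomp-core}), $E_i$ would intersect $\cap N_1(w)\cap X$ without being contained in it, contradicting well-definedness. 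Hence the residual piece $X\setminus\cap N_1(w)$ is disjoint from every elementary set, i.e.\ its interior is empty, so $b(w,X\setminus\cap N_1(w))=0$ by Proposition~\ref{bel-X} together with the remark following it that an empty maximal subset forces belief $0$.

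Now I would invoke additivity (Definition~\ref{additN}): since $X$ is the disjoint union of $\mathtt{int}_N(X)$ and $X\setminus\cap N_1(w)$, we get
\[
b(w,X) = b(w,\mathtt{int}_N(X)) + b(w,X\setminus\cap N_1(w)) = b(w,\mathtt{int}_N(X)) + 0,
\]
which is the claim. (One small wrinkle: additivity as stated quantifies over disjoint families $\{A_i\}_{i\in J}$; applying it to the two-element family $\{\mathtt{int}_N(X),\, X\setminus\cap N_1(w)\}$ is legitimate, and one should note these sets lie in $N_{\geqslant 0}(w)$, which holds since every subset of $W$ does.)

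The main obstacle is the lemma that the residual piece $X\setminus\cap N_1(w)$ has belief $0$. This is where well-definedness is genuinely used, and it rests on Proposition~\ref{decomp-core} ($\cap N_1(w)$ is the disjoint union of all elementary sets) to guarantee that \emph{every} elementary set sits inside the core; without that, an elementary set could stick out of $\cap N_1(w)$ and partially overlap $X$, breaking the dichotomy. I would make sure to cite Proposition~\ref{decomp-core} explicitly at that point. The additivity step and the sandwiching are then routine.
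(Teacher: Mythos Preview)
Your argument is correct. The paper does not actually give a proof of this lemma: it merely asserts, in the sentence preceding the statement, that it is ``a consequence of'' the definition of well-defined set, Proposition~\ref{moon-zappa}, and Proposition~\ref{bel-X}. Your decomposition $X=\mathtt{int}_N(X)\,\dot\cup\,(X\setminus\cap N_1(w))$ followed by additivity is exactly the natural way to flesh this out, and everything you cite is legitimate.

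Two minor observations. First, your dichotomy paragraph for elementary sets is slightly more elaborate than necessary: the residual $X\setminus\cap N_1(w)$ is disjoint from $\cap N_1(w)$ by construction, and since every elementary set lies inside $\cap N_1(w)$ (Proposition~\ref{decomp-core}), no elementary set can be a subset of the residual---well-definedness is needed only to identify the first piece of the decomposition with $\mathtt{int}_N(X)$, not to show the residual has belief~$0$. Second, once the additivity equation $b(w,X)=b(w,\mathtt{int}_N(X))+0$ is in hand, the preliminary lower bound from Proposition~\ref{bel-X} is no longer needed, so the sandwich structure is redundant (though harmless). The paper's pointer to Proposition~\ref{moon-zappa} rather than Proposition~\ref{decomp-core} suggests a variant route via $\sum_{E_i} b(w,E_i)=1$, but your choice is equally valid and arguably more direct.
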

 
If the frame is not additive or the set is not well-defined, we only can state $b(w,X)\geqslant b(w,\texttt{int}_N(X))$. When we define a model, sets corresponding to the truth sets of formulas should be well-defined. Otherwise, they may not be measurable.

\section{Relationship between probability and belief models}\label{sect3}

The relationship between probabilistic Kripke semantics and belief neighbourhood semantics is akin to the relationship between Kripke semantics and neighbourhood semantics: a Kripke model has a kind of equivalence called \emph{modal equivalence} with an augmented neighbourhood model. 

We will ground our proof for probabilistic models in the aforementioned  equivalence between Kripke semantics and a subclass of the neighbourhood semantics. First, we recall the latter proof, and then we will develop our own proof for probability and belief. The original proof for the relationship between Kripke and augmented neighbourhood models can be found in \cite{chellas1980modal}. Here we will follow Pacuit \cite{pacuit2017neighborhood}, which is also the source of these definitions:

\begin{definition}
\textbf{(R-necessity.)}
Given a Kripke frame $\langle W,R\rangle$, $X\subseteq W$ is  R-necessary at $w$ if $R(w)\subseteq W$. The set $\{ X\subseteq W\ |\ R(w)\subseteq X  \}$ will be denoted by $\mathcal{N}^R_w$. 
\end{definition}

\begin{definition}
\textbf{(Pointwise equivalence.)}
Given a non-empty set $W$, a Kripke frame $\langle W,R\rangle$ and a neighbourhood frame $\langle W,N\rangle$ are pointwise equivalent if, for all $w\in W$ and $X\subseteq W$, $X\in N(w)$ iff $X\in \mathcal{N}^R_w$.
\end{definition}

\begin{definition}
\textbf{(Modal equivalence.)} Given a modal language $\mathcal{L}$ and two classes of models for it $M$ and $M'$, we say that $M,w$ is $\mathcal{L}$-modally equivalent to $M,w'$ iff the set $\{\phi\in\mathcal{L}\ |\ M,w\models \phi\}$ is the same than $\{\phi\in\mathcal{L}\ |\ M',w'\models \phi\}$.

Given models $M$ and $M'$, if for every pointed model $M,w$, there is some $w'$ such that $M,w$ and $M,w'$ are pointwise equivalent, then  $M$ and $M'$ are modally equivalent. More generally, a class of models $M$ is modally equivalent to a class of models $M'$ iff for each pointed model $M,w$ there is another pointed model $M',w'$ such that they are modally equivalent.
\end{definition}

Modal equivalence is a concept which encloses other well-known ones, such as bisimulation. However, bisimulation is defined within structures of the same kind (for example, bisimulation in Kripke or neighbourhood models). A usual way of proving that $M$ and $M'$ are modally equivalent  is showing a way of transforming  $M$ into $M'$ and vice versa. The following propositions are also taken from \cite{pacuit2017neighborhood}:

\begin{proposition}\label{kmandanm}
\textbf{(Kripke models and augmented neighbourhood models.)} Let $\langle W,R\rangle $ be a Kripke frame. Then, there is a modally equivalent augmented neighborhood frame. 
\end{proposition}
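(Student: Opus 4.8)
The plan is to construct, from a given Kripke frame $\langle W,R\rangle$, the neighbourhood frame $\langle W,N\rangle$ defined by $N(w)=\mathcal{N}^R_w=\{X\subseteq W\mid R(w)\subseteq X\}$, and then to verify two things: first, that this $N$ is augmented, and second, that the two frames are pointwise equivalent, whence modally equivalent by the definition recalled above. Since pointwise equivalence holds by construction (it is literally the statement $X\in N(w)$ iff $X\in\mathcal{N}^R_w$), the real content is checking that $N$ so defined is augmented, i.e. monotonic and containing its core.

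First I would check monotonicity: if $X\in N(w)$ then $R(w)\subseteq X$, and if $X\subseteq Y$ then $R(w)\subseteq Y$, so $Y\in N(w)$. This is immediate from transitivity of $\subseteq$. Next I would identify the core $\bigcap N(w)$. Every set in $N(w)$ contains $R(w)$, so $R(w)\subseteq\bigcap N(w)$; conversely $R(w)$ itself belongs to $N(w)$ (since $R(w)\subseteq R(w)$), so $\bigcap N(w)\subseteq R(w)$. Hence $\bigcap N(w)=R(w)$, and since $R(w)\in N(w)$, the neighbourhood contains its core. Therefore $N$ is augmented. Finally I would invoke the definition of modal equivalence: pointwise equivalence of the two frames gives, for each pointed model $\langle M,w\rangle$ built on $\langle W,R\rangle$, a pointwise-equivalent (indeed identical-domain) pointed neighbourhood model, and one argues by induction on formula complexity that $M,w\models\phi$ iff the corresponding neighbourhood model satisfies $\phi$ at $w$ — the only nontrivial clause being the modal one, which reduces exactly to $\llbracket\phi\rrbracket\in N(w)$ iff $R(w)\subseteq\llbracket\phi\rrbracket$, i.e. the standard Kripke truth condition for $\square$.

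I do not expect any serious obstacle here; the statement is essentially a definitional unwinding plus the two one-line verifications of monotonicity and core-containment. The mild subtlety worth flagging is that a valuation must be carried along: "modally equivalent frame" is shorthand for "the frame supports, for every valuation, a modally equivalent model on the same carrier set", so I would state explicitly that the valuation $v$ is kept fixed when passing from $\langle W,R,v\rangle$ to $\langle W,N,v\rangle$, and that the inductive argument on formulas uses this. The only place where one must be a little careful is ensuring that the Kripke semantics of $\square$ in use — namely $w\models\square\phi$ iff $R(w)\subseteq\llbracket\phi\rrbracket$ — matches the neighbourhood clause $\llbracket\phi\rrbracket\in N(w)$; with $N(w)=\mathcal{N}^R_w$ this match is exact, so the induction goes through without friction.
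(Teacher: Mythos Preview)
Your proposal is correct and follows exactly the paper's approach: define $N(w)=\mathcal{N}^R_w$ and observe that the resulting frame is augmented and pointwise equivalent to the Kripke frame. The paper's proof of this proposition is a one-liner that merely names the construction and asserts it is augmented; you have simply supplied the routine verifications (monotonicity, $\bigcap N(w)=R(w)\in N(w)$) and the inductive argument on formulas that the paper defers to its Proposition~\ref{class-aug-k}.
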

\begin{proof}
For each $w\in W$, let $N(w)=\mathcal{N}^R_w$ as described above, and $\langle W,R\rangle$ is the desired neighbourhood frame, which is augmented. 
\end{proof}

\begin{proposition}
\textbf{(Kripke models and augmented neighbourhood models.)} 
Let $\langle W,N\rangle$ be an augmented neighborhood frame. Then, there is a modally equivalent relational frame. 
\end{proposition}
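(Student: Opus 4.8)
The plan is to read the relational frame straight off the cores of the neighbourhood function and then verify pointwise equivalence, from which modal equivalence follows immediately. Given the augmented neighbourhood frame $\langle W,N\rangle$, for each $w\in W$ write $c_w=\bigcap N(w)$ for the core of $N(w)$ and define $R\subseteq W\times W$ by $wRv$ iff $v\in c_w$; equivalently $R(w)=\bigcap N(w)$. Since the frame is augmented, every $N(w)$ is monotonic and contains its core, so $c_w\in N(w)$; note also that this already rules out $N(w)=\varnothing$, since the empty intersection would be $W\notin\varnothing$.

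Next I would verify $N(w)=\mathcal{N}^R_w$ for every $w$. For the inclusion $N(w)\subseteq\mathcal{N}^R_w$: if $X\in N(w)$, then $c_w=\bigcap N(w)\subseteq X$, i.e.\ $R(w)\subseteq X$, so $X\in\mathcal{N}^R_w$. For the reverse inclusion: if $R(w)\subseteq X$, i.e.\ $c_w\subseteq X$, then since $c_w\in N(w)$ and $N(w)$ is monotonic we obtain $X\in N(w)$. Hence $\langle W,R\rangle$ and $\langle W,N\rangle$ are pointwise equivalent at every world.

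It then remains to invoke the fact (already used in Proposition~\ref{kmandanm} for the Kripke-to-neighbourhood direction) that pointwise equivalence at every world yields modal equivalence: for any valuation $v$, an induction on the structure of $\phi$ shows that $\llbracket\phi\rrbracket$ is the same set in both models, the only non-trivial clause being the modal one, which reads $M,w\models\square\phi$ iff $\llbracket\phi\rrbracket\in N(w)$ iff $R(w)\subseteq\llbracket\phi\rrbracket$ iff $M,w\models\square\phi$ in the relational model.

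I do not expect a genuine obstacle here: the whole argument is the bookkeeping interplay between monotonicity and core-containment, and the only point requiring a moment's care is the degenerate case $c_w=\varnothing$ (equivalently $N(w)=\wp(W)$), which is consistent on both sides — the relational frame then simply has a dead-end world at $w$ and $\mathcal{N}^R_w=\wp(W)$.
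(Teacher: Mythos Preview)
Your proposal is correct and follows exactly the paper's approach: define $wR_Nv$ iff $v\in\bigcap N(w)$ and use augmentedness (core-containment plus monotonicity) to obtain pointwise equivalence. If anything, you supply more detail than the paper, which simply asserts that $\langle W,R_N\rangle$ is the desired frame without spelling out the two inclusions or the induction on formulas.
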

\begin{proof}
From an augmented $\langle W,N\rangle$, we define a binary relation $R_N\colon W\times W$ in the following way: For every $w,w'\in W$, let $wR_Nv$ iff $v\in \cap N(w)$. The frame $\langle W,R_N\rangle$ is the desired Kripke frame.
\end{proof}

As a result, we have the following:

\begin{proposition}\label{class-aug-k}
\textbf{(Kripke models and augmented neighbourhood models.)} 
The class of Kripke models is modally equivalent to the class of augmented neighbourhood models.
\end{proposition}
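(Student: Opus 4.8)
The plan is to derive Proposition \ref{class-aug-k} as an immediate corollary of the two preceding propositions, since together they establish the two directions needed for modal equivalence of the two classes. First I would recall the definition of modal equivalence of classes: a class $M$ is modally equivalent to a class $M'$ iff for every pointed model $\langle \mathcal{M},w\rangle$ with $\mathcal{M}\in M$ there is a pointed model $\langle \mathcal{M}',w'\rangle$ with $\mathcal{M}'\in M'$ that is modally equivalent to it, and symmetrically. So the proof splits into two inclusions.

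For the first direction, I would take an arbitrary Kripke model $\mathcal{M}=\langle W,R,v\rangle$. Applying Proposition \ref{kmandanm} to the underlying frame $\langle W,R\rangle$ yields the augmented neighbourhood frame $\langle W,N\rangle$ with $N(w)=\mathcal{N}^R_w$; keeping the same $W$ and the same valuation $v$, I would form the augmented neighbourhood model $\mathcal{M}'=\langle W,N,v\rangle$. The key point is that $\langle W,R\rangle$ and $\langle W,N\rangle$ are pointwise equivalent by construction, so by the observation recorded in the definition of modal equivalence (pointwise equivalence at every world implies modal equivalence of the models), $\mathcal{M},w$ and $\mathcal{M}',w$ are modally equivalent for every $w$. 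In particular one should check, by induction on the structure of formulas, that $\llbracket\phi\rrbracket$ is computed identically in both models, the only interesting clause being $\square\phi$, which holds at $w$ in $\mathcal{M}'$ iff $\llbracket\phi\rrbracket\in N(w)=\mathcal{N}^R_w$ iff $R(w)\subseteq\llbracket\phi\rrbracket$ iff $\square\phi$ holds at $w$ in $\mathcal{M}$. For the converse direction I would run the symmetric argument using the second proposition: given an augmented neighbourhood model $\langle W,N,v\rangle$, define $R_N$ by $wR_Nv$ iff $v\in\cap N(w)$, keep $W$ and $v$, and obtain a Kripke model that is pointwise (hence modally) equivalent, again using that augmentedness guarantees $\mathcal{N}^{R_N}_w=N(w)$.

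Since both directions hold with the same domain $W$ at the same world $w$, the two classes are modally equivalent, which is the claim. The only genuine work — and hence the main obstacle, though it is routine — is verifying that pointwise equivalence of frames lifts to modal equivalence of the models obtained by adjoining a common valuation, i.e.\ the inductive truth-lemma argument whose sole nontrivial case is the modal operator; everything else is bookkeeping on top of Propositions \ref{kmandanm} and the subsequent one, which already do the frame-level constructions. I would therefore keep the proof short: cite the two previous propositions, note that the constructions preserve the valuation and the carrier set, and invoke the pointwise-to-modal equivalence principle to conclude.
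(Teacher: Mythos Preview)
Your proposal is correct and follows essentially the same approach as the paper: invoke the frame-level constructions of the two preceding propositions, adjoin a common valuation on the same carrier $W$, and verify modal equivalence by a routine induction on formulas whose only nontrivial clause is $\square\phi$. The paper phrases the modal step via the core $\cap N(w)$ whereas you phrase it via $\mathcal{N}^R_w$, but these are the same computation and the overall structure is identical.
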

\begin{proof}
If we add a valuation function $v:\texttt{At}\to W$ to the previous structures, we obtain the corresponding Kripke and neighbourhood models. We observe that the satisfaction of formulas is maintained in all $w\in W$, by induction:
\begin{itemize}
    \item The satisfaction of atomic formulas is the same, since $v$ is the same.
    \item The satisfaction of the conjunction and negation of formulas is preserved, since in both semantics their interpretation is the same.
    \item Satisfaction of the modal formulas: 
    \begin{itemize}
        \item If $M,w\models_K \square\phi$, all worlds $w_i$ such that $wRw_i$ satisfy $\phi$. In the augmented neighbourhood model, all $w_i$ are in $\cap N(w)\in N(w)$ and $\cap N(w)\subseteq \llbracket\phi\rrbracket_{M'}\in N(w)$, and thus $M',w\models_N \phi$ by the definition of satifaction of $\square\phi$ in neighbourhood semantics.
        \item If $M',w\models_N \square\psi$, by defining a relation $R_N\colon W\times W$ as stated before, we obtain a set of worlds $w_i$ such that $wR_nw_i$, and all of them satisfy $\phi$. Thus, $M,w\models_K\square\phi$ in the Kripke model.
    \end{itemize} 
\end{itemize}

\end{proof}

Fagin and Halpern \cite{fagin1991uncertainty} show that for every probability structure, there is an equivalent structure in which belief functions have been defined, and the converse is not true in general, just under specific circumstances.
Having seen the relationship between Kripke frames and augmented neighbourhood frames, now we will study this relationship for probabilistic Kripke frames and a subclass of belief neighbourhood frames, namely additive ones. We will base our proof in the previous ones and the results of the previous section.

\begin{proposition}\label{KtoMp} Let $\mathcal{M}^P_K =\langle W,\mu,R\rangle$ be a probabilistic Kripke frame. There is an additive belief neighbourhood frame which is modally equivalent to $\mathcal{M}^P_K$.
    
\end{proposition}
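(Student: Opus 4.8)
The plan is to mimic the construction in Proposition~\ref{kmandanm}, enriching it with the probabilistic data. Starting from a probabilistic Kripke frame $\mathcal{M}^P_K=\langle W,\mu,R\rangle$, I would fix a world $w$ and use $\mu(w,\cdot)$ to define, for each $\alpha$, the neighbourhoods
\[
N_{\geqslant\alpha}(w)=\{X\subseteq W\ |\ \textstyle\sum_{w_i\in X}\mu(w,w_i)\geqslant\alpha\},\qquad
N_{>\alpha}(w)=\{X\subseteq W\ |\ \textstyle\sum_{w_i\in X}\mu(w,w_i)>\alpha\},
\]
and set $b(w,X)=\sum_{w_i\in X}\mu(w,w_i)=pr(w,X)$. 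This is the obvious ``pointwise'' lift of the earlier construction: where Proposition~\ref{kmandanm} took $\mathcal{N}^R_w$, here we take the sublevel/superlevel sets of the measure $\mu(w,\cdot)$.

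The first block of work is to check that $\langle W,b,\{N_{\geqslant\alpha}\},\{N_{>\alpha}\},v\rangle$ (after copying the valuation $v$) is genuinely a belief neighbourhood model: $b(w,W)=1$ and $b(w,\varnothing)=0$ follow from the Kripke axioms, superadditivity is immediate because $b$ is in fact additive (a probability is an additive belief function, Remark~\ref{prandbel}), the monotonicity of each $N_{\geqslant\alpha}(w)$ is clear since adding worlds only increases the sum, and the nesting $N_{>\alpha}\subseteq N_{\geqslant\alpha}$ etc.\ is routine. The compatibility clause ``$b(w,X)\geqslant\alpha\Rightarrow X\in N_{\geqslant\alpha}(w)$'' holds by construction. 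Additivity of the frame in the sense of Definition~\ref{additN} is exactly finite additivity of $\mu(w,\cdot)$, hence automatic.

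The second block is the modal-equivalence argument, by induction on formula complexity, just as in Proposition~\ref{class-aug-k}. Atomic, negation and disjunction cases are preserved because $v$ and the Boolean clauses are identical in the two semantics. For the modal operators, $\mathcal{M}^P_K,w\models_K B_{\geqslant\alpha}\phi$ iff $pr(w,\llbracket\phi\rrbracket)\geqslant\alpha$; on the neighbourhood side $\mathcal{M}^B_N,w\models_N B_{\geqslant\alpha}\phi$ iff $\llbracket\phi\rrbracket\in N(w)$, $b(w,\llbracket\phi\rrbracket)\geqslant\alpha$ and $b(w,\llbracket\phi\rrbracket)+b(w,\llbracket\neg\phi\rrbracket)=1$; by the definition of $b$ the value matches $pr(w,\llbracket\phi\rrbracket)$, the membership $\llbracket\phi\rrbracket\in N(w)$ is arranged, and the additivity clause holds because $\mu(w,\cdot)$ sums to $1$ over $W=\llbracket\phi\rrbracket\cup\llbracket\neg\phi\rrbracket$ (disjointly). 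The $\underline{B}_{\geqslant\alpha}$ and strict-inequality variants go the same way. One has to be slightly careful about which $N(w)$ is meant in the satisfaction clauses — I read it as $N_{\geqslant 0}(w)$, which contains every subset of $W$, so the membership condition is free; this should be stated explicitly, and it is here that one invokes that truth sets are ``well-defined'' (Lemma~\ref{b-X-aa}) so that $b$ behaves additively on them, which it does trivially since the whole frame is additive.

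The main obstacle I anticipate is not any single computation but making the bookkeeping of the many neighbourhood families and the side-conditions in the satisfaction clauses line up cleanly — in particular pinning down the role of the core $\cap N_1(w)$ (which here should come out as $R(w)$, the set of $\mu(w,\cdot)$-positive worlds) and the interior operator, and confirming that with an additive $b$ every truth set is automatically well-defined so that Lemma~\ref{b-X-aa} applies without extra hypotheses. Once that alignment is fixed, the induction is routine and mirrors Proposition~\ref{class-aug-k} step for step.
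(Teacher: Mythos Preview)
Your construction is correct and yields exactly the same additive belief neighbourhood frame as the paper's, but you reach it more directly. The paper builds $b$ in two stages: first it declares the singletons $\{w_i\}\subseteq R(w)$ to be the elementary sets of $N_{>0}(w)^{\mathcal{NC}}$ with $b(w,\{w_i\})=\mu(w,w_i)$, and then extends $b$ to arbitrary $A\subseteq W$ via the interior operator of Definition~\ref{def:interior}, $b(w,A)=b(w,\texttt{int}_N(A))$; you simply set $b(w,X)=\sum_{w_i\in X}\mu(w,w_i)$ in one stroke. The two definitions coincide because $\mu(w,w_i)=0$ for $w_i\notin R(w)$, so summing over $X$ or over $X\cap R(w)=\texttt{int}_N(X)$ gives the same value. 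What the paper's detour buys is an explicit link to the Section~\ref{sec:NMC} machinery (non-monotonic core, elementary sets, interior), which it then reuses for the converse direction in Proposition~\ref{NtoPp}; what your route buys is economy and no need to invoke Lemma~\ref{b-X-aa} or ``well-definedness'' at all --- since you define $b$ globally as an additive measure, that lemma is vacuous here, and you can safely drop that paragraph. Finally, note that you have also folded in the formula-induction for modal equivalence; in the paper this is postponed to Proposition~\ref{Meq-kn}, and Proposition~\ref{KtoMp} itself only constructs the frame.
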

\begin{proof}
 If we have  $\mathcal{M}^P_K$, we first obtain  $\cap N_1(w)$ for each $w\in W$ in an analogous way to the procedure described in proposition \ref{kmandanm} for Kripke and neighbourhood models. It will be the core of an augmented neighbourhood \ref{prop:N0-1aug}. 
 
    Now let us consider all singletons  $\{w_i\}\subseteq R(w)=\cap N_1 (w)$. They will be the elementary sets in  $N_{> 0}(w)^{\mathcal{NC}}$. We want to define an additive neighbourhood frame, and then we know by proposition  \ref{moon-zappa} that the following condition has to be met: $\sum_{\{w_i\}\in N_{>0}(w)^{\mathcal{NC}}}$ $b(w,\{w_i\})=1$. Thus, for all singletons $\{w_i\}$, we define a function  $b_k(w,\{w_i\})=\mu(w,w_i)$.  We also specify that $b(w,\varnothing)=0$, and thus $\varnothing\in N_{\geqslant 0}(w)$. 

    We have to define the value of $b$ for the rest of the sets in $N_{\geqslant 0}
    (w)$. Neighbourhoods are monotonic \ref{monot-n}, so for every $A\subseteq W$, we are going to consider its interior   $\texttt{int}_N(A)$ \ref{def:interior}. We define $b(w,A)=b(w,\texttt{int}_N(A))$ (see lemma \ref{b-X-aa}). If $\texttt{int}_N(A)$=0, then $b(w,A)=0$.

\end{proof}
\begin{proposition} \label{NtoPp}
Let $\mathcal{M}^B_N=\langle W,b,\{N_{\geqslant\alpha,\alpha\in [0,1]}\}\rangle$ be   an additive belief neighbourhood frame. There is a probabilistic Kripke frame which is modally equivalent to $\mathcal{M}^B_N$.

\end{proposition}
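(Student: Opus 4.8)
The plan is to reverse the construction of Proposition \ref{KtoMp}, extracting a probability measure $\mu$ on $W\times W$ from the additive belief function $b$ and then checking that the associated Kripke frame is pointwise (hence modally) equivalent to the given additive belief neighbourhood frame. First I would, for each $w\in W$, form the non-monotonic core $N_{>0}(w)^{\mathcal{NC}}$ and appeal to Proposition \ref{moon-zappa}: since the frame is additive, the elementary sets $E_i \in N_{>0}(w)^{\mathcal{NC}}$ are pairwise disjoint and satisfy $\sum_{E_i} b(w,E_i)=1$. The key observation I would want to establish is that in an additive frame each elementary set is in fact a singleton: if some $E_i$ had two distinct worlds $u,u'$, additivity would force $b(w,\{u\})+b(w,\{u'\})=b(w,E_i)$ with both summands $\geqslant 0$, and minimality of $E_i$ in the non-monotonic core would be violated unless one of the singletons has belief $0$ — and I would argue (using Proposition \ref{decomp-core}, that $\cap N_1(w)$ is the disjoint union of the elementary sets, together with well-definedness) that this cannot happen, so $E_i=\{w_i\}$ is a singleton. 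Granting this, I define $\mu(w,w_i)=b(w,\{w_i\})$ when $\{w_i\}\in N_{>0}(w)^{\mathcal{NC}}$ and $\mu(w,w')=0$ otherwise; then $\mu(w,\cdot)\geqslant 0$ and $\sum_{w'\in W}\mu(w,w')=\sum_{E_i} b(w,E_i)=1$, so $\langle W,\mu,R_\mu\rangle$ with $wR_\mu w' \iff \mu(w,w')>0$ is a legitimate probabilistic Kripke frame.

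Next I would verify modal equivalence by the same inductive argument as in Proposition \ref{class-aug-k}. Adding the common valuation $v$, the atomic, negation and disjunction cases are immediate. For the modal case the crux is that for every formula $\phi$ whose truth set is well-defined, $pr(w,\llbracket\phi\rrbracket)=\sum_{w_i\in\llbracket\phi\rrbracket}\mu(w,w_i)=\sum_{\{w_i\}\subseteq\,\mathtt{int}_N(\llbracket\phi\rrbracket)} b(w,\{w_i\})=b(w,\mathtt{int}_N(\llbracket\phi\rrbracket))=b(w,\llbracket\phi\rrbracket)$, where the last equality is Lemma \ref{b-X-aa} (additivity plus well-definedness) and the middle one is additivity over the disjoint singletons comprising the interior. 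Hence $\mathcal{M}^P_K,w\models_K \underline{B}_{\geqslant\alpha}\phi$ iff $bel(w,\llbracket\phi\rrbracket)\geqslant\alpha$ iff $b(w,\llbracket\phi\rrbracket)\geqslant\alpha$ and $\llbracket\phi\rrbracket\in N(w)$, matching $\models_N$; the cases for $\underline{B}_{>\alpha}$ and for $B_{\geqslant\alpha},B_{=\alpha}$ follow the same way, the additivity condition $b(w,\llbracket\phi\rrbracket)+b(w,\llbracket\neg\phi\rrbracket)=1$ being automatic in an additive frame.

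The main obstacle I anticipate is the singleton claim for elementary sets, i.e. showing that additivity genuinely collapses the non-monotonic core of $N_{>0}(w)$ down to singletons rather than merely to a partition into minimal-but-possibly-large blocks. One has to rule out an elementary block $E$ with $|E|\geqslant 2$ all of whose proper nonempty subsets have belief $0$; this requires care because a priori such subsets need not be truth sets of formulas, so the argument must be carried out at the level of the neighbourhood structure and the belief function directly, invoking $b(w,W)=1$, superadditivity, and Proposition \ref{decomp-core}. A secondary subtlety is bookkeeping around well-definedness: I must note that the truth sets arising in the model are assumed well-defined, so Lemma \ref{b-X-aa} applies throughout the induction, and I should remark (as the text does after that lemma) that this is a standing assumption rather than something proved here.
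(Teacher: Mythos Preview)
Your proposal is correct, but it diverges from the paper's construction in one respect worth noting. The paper does \emph{not} argue that elementary sets are singletons; it simply takes each elementary block $E_i\subseteq\cap N_1(w)$ and distributes $b(w,E_i)$ arbitrarily among the worlds of $E_i$ (subject only to the sum being $b(w,E_i)$), then relies on the standing assumption that truth sets are well-defined (so no formula ``cuts'' an elementary block) to push the modal-equivalence argument through in Proposition~\ref{Meq-kn}. Your route instead proves that additivity collapses every elementary set to a singleton, which makes the definition of $\mu$ canonical and, as a bonus, renders the well-definedness hypothesis automatic: if elementary sets are singletons then $X\cap\bigcap N_1(w)=\mathtt{int}_N(X)$ for every $X$. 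So your approach buys a cleaner, assumption-free construction, while the paper's buys a shorter argument at the cost of an auxiliary hypothesis on truth sets.

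One comment on your singleton argument: the reasoning you sketch is slightly tangled and the obstacle you anticipate is not really there. The direct argument is this: if $E\in N_{>0}(w)^{\mathcal{NC}}$ and $|E|\geqslant 2$, then by minimality \emph{every} proper nonempty subset of $E$ has belief $0$; in particular $b(w,\{u\})=0$ for each $u\in E$. Additivity over the partition of $E$ into its singletons then gives $b(w,E)=\sum_{u\in E}b(w,\{u\})=0$, contradicting $E\in N_{>0}(w)$. No appeal to Proposition~\ref{decomp-core} or to well-definedness is needed. Finally, note that the paper separates the frame construction (this proposition) from the inductive verification of modal equivalence (Proposition~\ref{Meq-kn}); your proposal folds both into one, which is fine but accounts for some of the extra length.
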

\begin{proof}    
    For all $w\in W$, we define a binary relationship $R_N:W\times W$ in this way: $wR_N w'$ iff $w'\in \cap N_1(w)$. We will denote by $R_N(w)$ the set of all $w'$ in this relation. 

    We know from proposition \ref{decomp-core} that $\cap N_1(w)$ is equal to the disjoint union of the elementary sets in $N_{> 0}(w)^{\mathcal{NC}}$. The frame is additive, so the sum of the beliefs of the elementary sets is equal to 1. Thus, we define a function $\mu_N:W\times W\to [0,1]$ such that $\mu_N(w,w_i)=0$ if  $w'\notin \cap N_1(w)$ and $\mu(w,w_i)>0$ otherwise. The value of $\mu_N(w,w_i)>0$ in this second case will be determined in the following step.
    
    Now we must consider each elementary set $E_i$. It will correspond to one or more $w_i$ in $R_N(w)$.  Since each set is disjoint, the exact value of $\mu(w,w_i)$  is arbitrary, as long as the sum of $\mu_N(w,w_i)$ for all $w_i$ within a given elementary set $E_i$ is equal to $b(w,E_i)$. We make this for all elementary sets.
    
    Since the frame is additive and $\cap N_1 (w)=\bigcup_{i=1}^n E_i$ for $\{E_i\}$ disjoint sets, then $b(w,\cap N_1 (w))=1=\sum_{i=1}^n b(w,E_i)$.
    Thus, $\sum_{w_i\in W}\mu_N(w,w_i)=1$.
\end{proof}

As a consequence, we have the following result:

\begin{proposition}\label{Meq-kn}
\textbf{(Equivalence between probabilistic and belief models.)} The class of probabilistic Kripke models is modally equivalent to the class of of additive  belief models.
\end{proposition}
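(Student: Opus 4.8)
The plan is to lift the two frame-level correspondences already established — Proposition~\ref{KtoMp} (from a probabilistic Kripke frame to an additive belief frame) and Proposition~\ref{NtoPp} (the converse) — up to models, in exactly the way Proposition~\ref{class-aug-k} lifts the Kripke/augmented-neighbourhood frame correspondences. Both constructions keep the carrier set $W$ fixed, so given a probabilistic Kripke model $\langle W,\mu,R,v\rangle$ I would equip the additive belief frame produced by Proposition~\ref{KtoMp} with the same valuation $v$, and symmetrically in the other direction. It then suffices to prove that, for a fixed $W$, a fixed $v$, and measures $\mu$ and $b$ linked as in those two constructions, $\mathcal{M}^P_K,w\models_K\phi$ if and only if $\mathcal{M}^B_N,w\models_N\phi$ for every $w\in W$ and every $\phi\in\mathcal{L}_{PR}$; the class-level statement follows at once.

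This equivalence I would prove by induction on the structure of $\phi$. The atomic case is immediate because $v$ is shared, and the Boolean cases are immediate because $\neg$ and $\lor$ are interpreted identically in the two semantics; note that the induction hypothesis in fact yields $\llbracket\psi\rrbracket_{\mathcal{M}^P_K}=\llbracket\psi\rrbracket_{\mathcal{M}^B_N}$ as subsets of $W$ for every proper subformula $\psi$, so in the modal step the relevant truth set $X=\llbracket\phi\rrbracket$ is literally the same object on both sides. For the clauses $\underline{B}_{\geqslant\alpha}\phi$ and $\underline{B}_{>\alpha}\phi$ the only two facts needed are: (i) $X$ is well-defined, hence lies in the neighbourhood and is measurable; and (ii) $pr(w,X)=b(w,X)$. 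In the direction given by Proposition~\ref{KtoMp} the elementary sets are precisely the singletons $\{w_i\}$ with $w_i\in R(w)=\cap N_1(w)$, so $\mathtt{int}_N(X)=X\cap \cap N_1(w)$ for every $X$; thus every truth set is well-defined, Lemma~\ref{b-X-1} gives $b(w,X)=b(w,\mathtt{int}_N(X))$, and since $\mu(w,w_i)=0$ off $R(w)$ this equals $\sum_{w_i\in X}\mu(w,w_i)=pr(w,X)$. In the direction given by Proposition~\ref{NtoPp} one uses that truth sets of formulas are taken to be well-defined (the remark following Lemma~\ref{b-X-1}), so by Lemma~\ref{b-X-1} and additivity $b(w,X)=b(w,\mathtt{int}_N(X))=\sum_{E_i\subseteq X}b(w,E_i)=\sum_{E_i\subseteq X}\sum_{w_i\in E_i}\mu_N(w,w_i)=\sum_{w_i\in X}\mu_N(w,w_i)=pr(w,X)$, the last step because $\mu_N$ vanishes off $\cap N_1(w)$ and $X\cap \cap N_1(w)=\mathtt{int}_N(X)$. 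Finally, the clause for $B_{\geqslant\alpha}\phi$ carries the extra requirement $b(w,\llbracket\phi\rrbracket)+b(w,\llbracket\neg\phi\rrbracket)=1$, which is automatic in an additive frame since $\llbracket\phi\rrbracket$ and $\llbracket\neg\phi\rrbracket$ are disjoint and cover $W$; it therefore matches the Kripke side, where $pr=bel$ with no side condition.

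Assembling the induction with the two frame constructions gives the claim: every pointed probabilistic Kripke model is modally equivalent to some pointed additive belief model and conversely, so the two classes are modally equivalent. I expect the main obstacle to be the bookkeeping around measurability: the neighbourhood clause for $\underline{B}_{\geqslant\alpha}\phi$ is only meaningful when $\llbracket\phi\rrbracket$ belongs to the neighbourhood, so the induction must simultaneously maintain that every subformula's truth set is well-defined and that its belief value collapses onto its interior via Lemma~\ref{b-X-1}. This is automatic on the Kripke-to-neighbourhood side because all elementary sets are singletons, but on the neighbourhood-to-Kripke side it relies on restricting attention to admissible additive belief models whose formula truth sets are well-defined; I would make that restriction explicit in the reading of ``the class of additive belief models'' intended here.
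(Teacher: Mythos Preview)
Your proposal is correct and follows essentially the same route as the paper: lift the two frame constructions (Propositions~\ref{KtoMp} and~\ref{NtoPp}) to models by keeping the same $W$ and $v$, then verify by induction on formulas that the modal clauses agree via the identity $pr(w,X)=b(w,\texttt{int}_N(X))=b(w,X)$. You are in fact more thorough than the paper's own proof, which only spells out the case $B_{\geqslant\alpha}\phi$ and leaves the $\underline{B}$ clauses and the well-definedness caveat implicit; your explicit treatment of those points, and your remark that the neighbourhood-to-Kripke direction presupposes that formula truth sets are well-defined, are genuine improvements rather than deviations.
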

\begin{proof}
Analogous to proposition \ref{kmandanm}. If we add a valuation $v:\texttt{At}\to W$ to the previous frames, we obtain the corresponding models. We just need to consider modal operators:

\begin{itemize}
\item \textbf{From Kripke to neighbourhood models:} If $\mathcal{M}^P_K,w\models B_{\geqslant \alpha}\phi$, then there is a subset of $R(w)$ such that the sum of their probabilities $\mu(w,w_i)$ is a certain value $\alpha'\geqslant\alpha$, and the analogue value for $\neg\phi$ is a certain value  $\beta$ such that $\alpha'+\beta=1$. By applying the process described in proposition \ref{KtoMp}, the elementary sets of the modally equivalent neighbourhood model are formed by a series of singletons, one for each world in $R(w)$. The union of all the elementary sets corresponding to worlds in $R(w)$ that satisfy $\phi$ is $\texttt{int}_N(\llbracket\phi\rrbracket_M)$.  $b(w,\llbracket\phi\rrbracket_M)=b(w,\texttt{int}_N(\llbracket\phi\rrbracket_M)$ is the sum of the beliefs of the elementary sets corresponding to worlds satisfying $\phi$. Thus $b(w,\llbracket\phi\rrbracket_M)=\alpha'\geqslant \alpha$. The same reasoning applies to $\neg\phi$, giving a value of $b(w,\llbracket\neg\phi\rrbracket_M)=\beta$. Therefore $\mathcal{M}^B_N\models \underline{B}_{\geqslant \alpha'\geqslant \alpha}\phi$ and $\mathcal{M}^B_N\models \underline{B}_{\geqslant \beta}\neg\phi$. We know that this logically entails $\mathcal{M}^B_N\models B_{\geqslant \alpha}\phi$.

\item \textbf{From additive neighbourhood to Kripke models:} Let us have $\mathcal{M}^B_N,w\models_N B_{\geqslant \alpha}\phi$. We first obtain $R(w)$ from $\cap N_1(w)$ as described in proposition \ref{NtoPp}: We define $R_N\colon W\times W$ such that $wRw_i$ if $w_i \in \cap N_1(w)$. A subset of the worlds in $\cap N_1(w)$ satisfies $\phi$, and this subset is also a subset of $\llbracket\phi\rrbracket_M$. More precisely, this subset, $\cap N_1(w)\cap \llbracket\phi\rrbracket_M=\texttt{int}_N (\llbracket\phi\rrbracket_M)$, and  by lemma \ref{b-X-1}, $b(w_1,\texttt{int}_N(\llbracket\phi\rrbracket_M))=b(w_1,\llbracket\phi\rrbracket_N)$. Thus, this value is at least $\alpha$, and therefore, the worlds in $R(w)$ that satisfy $\phi$ must sum up a probability of at least $\alpha$. Therefore, $\mathcal{M}^P_K,w\models_K B_{\geqslant \alpha}\phi$.
\end{itemize}

\end{proof}
Not all belief neighbourhood frames have a modally equivalent probabilistic neighbourhood frame: If the neighbourhood frame is not additive, we cannot obtain a modally equivalent Kripke frame. Instead, there will be an infinite family of of probabilistic Kripke frames that are compatible with that belief neighbourhood model. This means that we can obtain an infinite family of Kripke models, and the formulas that are satisfied on them logically entail the formulas of the neighbourhood model, but the converse is not true in general.

\begin{example}{(Example 4: From a probabilistic Kripke model to a belief neighbourhood model and back.)}\label{example4}
\textbf{From probabilistic Kripke model to belief neighbourhood model:}  The $\mathcal{M}^P_K$ in Example 1  (\ref{example1}) can have a modally equivalent belief neighbourhood model. Let us study just one world, the process in the other ones is identical. Let us take $w_1$, we recall that in $\mathcal{M}^P_K$, $\mu(w_1,w_1)=0$, $\mu(w_1,w_2)=0.4$, $\mu(w_1,w_3)=0.6$, $\mu(w_1,w_4)=0$. We will assign a certain $b$ to every subset of $R(w_1)=\{w_2,w_3\}$. $b(\varnothing)=0$, and thus $b(w_1,\{w_3\})=\mu(w_1,w_3)$=0.6, and $b(w_1,\{w_2\})=\mu(w_1,w_2)=0.4$. $b(\{w_1\}=0$. The belief function for the rest of the subsets of $W$ is defined from the belief function of their interior, that it is some of the previous sets. For example, $b(w_1,W)=b(w_1,\{w_2,w_3\})=1$, and $b(w_1,\{w_1,w_3\})=b(w_1,\{w_3\})=0.6$.

If we apply the valuation function that we have defined in Example 1, $\{w_1,w_3\}=\llbracket p\rrbracket_M$, and thus $b(w_1,p)=0.6$. Similarly, $\{w_2,w_4\}=\llbracket \neg p \rrbracket_M$, and thus $b(w_1,\neg p)=0.4$. That is, $b(w_1,p)+b(w_1,\neg p)=1$.

\textbf{From additive belief neighbourhood model to probabilistic Kripke model:} Now let us see the belief neighbourhood model we have obtained. Let us take $w_1$ again, for example, and let us observe that $\cap N_1 (w_1)=\{w_2,w_3\}$. Both $\{w_2\}$ and $\{w_3\}$ are elementary sets in $N_{\geqslant 0}(w_1)$, and we know that $b,(w_1,\{w_2\})+b,(w_1,\{w_3\})= 1$, and thus the frame is additive.  

Let us define $R_N\colon W\times W$ and $\mu_N\colon W\times W\to [0,1]$, and $\mu_N(w_1,w_2)=0.4$ and $\mu_N(w_1,w_3)=0.6$. Since each elementary set $E_i$ is formed by a single world, we do not have to distribute the value of $b(w_1,E_i)$ among several worlds. For several worlds, we should operate in this way: Let us assume that $E_1=\{w_2,w_5\}$, then $\mu_N(w_1,w_2)+\mu_N(w_1,w_5)=b(w_1,E_1)$, for example $\mu_N(w_1,w_2)=0.3$ and $\mu_N(w_1,w_5)=0.1$. Since $w_1$ and $w_4$ are not in $\cap N_1(w_1)$, then $\mu_N(w_1,w_1)=0$ and $\mu_N(w_1,w_4)=0$. We have that $R_N(w_1)=\{w_2,w_3\}$, which is the set of worlds in $\cap N_1(w_1)$. We verify that $\mu_N(w_1,w_2)+\mu_N(w_1,w_3)=1$, which is the expected value.

\end{example}

\begin{example}{(Example 5: Belief neighbourhood model with no modally equivalent probabilistic Kripke model.)}\label{example5}
We can take the belief neighbourhood model that we have created in the previous example and just make two small changes. Let us make $b(w_1,\{w_2\})=0.4$ and $b(w_1,\{w_3\})=0.3$, but $b(w_1,\{w_2,w_3\})=1$. We have that $b(w_1,\{w_2,w_3\})$ $> b(w_1,\{w_2\})+b(w_1,\{w_3\})$ and thus additivity is not held \ref{moon-zappa}. Thus, there is not a Kripke frame that is modally equivalent to this neighbourhood frame, but infinite Kripke frames, these with $\mu_N(w_1,w_2)\geqslant b(w_1,\{w_2\})=0.3$, $\mu_N(w_1,w_2)\geqslant b(w_1,\{w_3\}=0.4$,  and  $\mu_N(w_1,w_2)+\mu_N(w_1,w_3)=1$. 

Let us suppose a certain Kripke frame such that $\mu(w_1,w_2)=0.4$ and $\mu(w_1,w_3)=0.6$, being $R(w_1)=\{w_2,w_3\}$. We have that $\mathcal{M}^P_K,w_1\models_K B_{0.4}p$, for example. $B_{0.4}p$ logically entails $\underline{B}_{0.3}p$, which is satisfied in $w_1$ in the neighbourhood model. However, $\underline{B}_{0.3}p$ does not logically entail $B_{0.4}p$. 
\end{example}

\section{Concluding remarks and future work}\label{secfinal}

\textbf{Concluding remarks. } It has been shown that every probability can be understood as an additive belief function \ref{prandbel}. In a probability Kripke model, it is possible to express subjective probabilities, which also are additive belief functions \ref{pr-bel-KM}, but it is not possible to express arbitrary belief functions. In a neighborhood belief model, a probability is a special case of belief functions, additive ones. Thus, neighborhood models allow for a more expressive interpretation by distinguishing between probability and belief functions. In the final section, we have seen how to interpret this in terms of relationships between models: Every probabilistic Kripke model can be converted into a modally equivalent neighborhood model, but the reverse is only possible for additive belief neighbourhood models \ref{Meq-kn}. They form a proper subclass of of belief neighborhood models.

\textbf{Future works.}
In this work we have deliberately omitted the consideration of conditional probability and conditional belief, which arise as an immediate logical extension. In future works, an possible line of research is their study and their connection to basic logical operations, such as probabilistic deduction and the application of Bayes' theorem. Specifically, our aim is to integrate these operations within the framework of belief neighborhood models, considering these operations on probabilistic Kripke models as equivalent to their counterparts in a specific subclass of neighbourhood models.

\vspace{5mm}

\textbf{Acknowledgements.} Thanks to the two anonymous peer reviewers who greatly helped with their suggestions to improve the final version of this article.

\printbibliography 

@inproceedings{fagin1988reasoning,
  title={Reasoning about knowledge},
  author={Fagin, Ronald and Halpern, J and Moses, Y and Vardi, M},
  booktitle={Proceedings of the Second Conference on Theoretical Aspects of Reasoning about Knowledge: March 7-9, 1988, Pacific Grove, California},
  pages={277--293},
  year={1988},
  organization={Morgan Kaufmann}
}

@article{heifetz1998topology,
  title={Topology-free typology of beliefs},
  author={Heifetz, Aviad and Samet, Dov},
  journal={journal of economic theory},
  volume={82},
  number={2},
  pages={324--341},
  year={1998},
  publisher={Elsevier}
}

@article{koopman1940bases,
  title={The bases of probability},
  author={Koopman, BO},
  journal={Bull. Amer. Math. Soc.},
  volume={46},
  number={11},
  pages={763--774},
  year={1940}
}

@article{halpern1992two,
  title={Two views of belief: belief as generalized probability and belief as evidence},
  author={Halpern, Joseph Y and Fagin, Ronald},
  journal={Artificial intelligence},
  volume={54},
  number={3},
  pages={275--317},
  year={1992},
  publisher={Elsevier}
}

@book{bacchus1991representing,
  title={Representing and reasoning with probabilistic knowledge: a logical approach to probabilities},
  author={Bacchus, Fahiem},
  year={1991},
  publisher={MIT press}
}

@article{kooi2003probabilistic,
  title={Probabilistic dynamic epistemic logic},
  author={Kooi, Barteld P},
  journal={Journal of Logic, Language and Information},
  volume={12},
  pages={381--408},
  year={2003},
  publisher={Springer}
}

@article{nilsson1986probabilistic,
  title={Probabilistic logic},
  author={Nilsson, Nils J},
  journal={Artificial intelligence},
  volume={28},
  number={1},
  pages={71--87},
  year={1986},
  publisher={Elsevier}
}

@inproceedings{shirazi2007probabilistic,
  title={Probabilistic modal logic},
  author={Shirazi, Afsaneh and Amir, Eyal},
  booktitle={Proceedings of the 22nd AAAI Conference on Artificial Intelligence},
  volume={7},
  pages={489--495},
  year={2007}
}

@article{fine1988lower,
  title={Lower probability models for uncertainty and nondeterministic processes},
  author={Fine, Terrence L},
  journal={Journal of statistical Planning and inference},
  volume={20},
  number={3},
  pages={389--411},
  year={1988},
  publisher={Elsevier}
}

@book{keynes1921treatise,
  title={A treatise on probability},
  author={Keynes, John Maynard},
  year={1921},
  publisher={Macmillan and Co.}
}

@article{flaminio2011reasoning,
  title={Reasoning about uncertainty of fuzzy events: an overview},
  author={Flaminio, Tommaso and Godo, Llu{\i}s and Marchioni, Enrico},
  journal={Understanding Vagueness-Logical, Philosophical, and Linguistic Perspectives, P. Cintula et al.(Eds.), College Publications},
  pages={367--400},
  year={2011}
}

@book{hajek2013metamathematics,
  title={Metamathematics of fuzzy logic},
  author={H{\'a}jek, Petr},
  volume={4},
  year={2013},
  publisher={Springer Science \& Business Media}
}

@article{esteva2000reasoning,
  title={Reasoning about probability using fuzzy logic},
  author={Esteva, F and Godo, Llu{\i}s and H{\'a}jek, P},
  journal={Neural Network World},
  volume={10},
  number={5},
  pages={811--824},
  year={2000}
}

@book{pacuit2017neighborhood,
  title={Neighborhood semantics for modal logic},
  author={Pacuit, Eric},
  year={2017},
  publisher={Springer}
}

@article{corsi2023logico,
  title={A logico-geometric comparison of coherence for non-additive uncertainty measures},
  author={Corsi, Esther Anna and Flaminio, Tommaso and Hosni, Hykel},
  journal={Annals of Pure and Applied Logic},
  pages={103342},
  year={2023},
  publisher={Elsevier}
}

@book{shafer1976mathematical,
  title={A mathematical theory of evidence},
  author={Shafer, Glenn},
  volume={42},
  year={1976},
  publisher={Princeton university press}
}

@article{dempster1967upper,
  title={Upper and Lower Probabilities induced by a Multi-valued Mapping},
  author={Dempster, Arthur},
  journal={Annals of Mathematical Statistics},
  volume={38},
  year={1967}
}

@article{LARSEN19911,
author = {Kim G. Larsen and Arne Skou},
title = {Bisimulation through probabilistic testing},
journal = {Information and Computation},
volume = {94},
number = {1},
pages = {1-28},
year = {1991}
}

@inproceedings{boeva1998modelling,
  title={Modelling uncertainty with Kripke's semantics},
  author={Boeva, Veselka and Tsiporkova, Elena and De Baets, Bernard},
  booktitle={Artificial Intelligence: Methodology, Systems, and Applications: 8th International Conference, AIMSA’98 Sozopol, Bulgaria, September 21--23, 1998 Proceedings 8},
  pages={129--140},
  year={1998},
  organization={Springer}
}

@article{fagin1991uncertainty,
  title={Uncertainty, belief, and probability},
  author={Fagin, Ronald and Halpern, Joseph Y},
  journal={Computational Intelligence},
  volume={7},
  number={3},
  pages={160--173},
  year={1991},
  publisher={Wiley Online Library}
}

@article{heifetz2001probability,
  title={Probability logic for type spaces},
  author={Heifetz, Aviad and Mongin, Philippe},
  journal={Games and economic behavior},
  volume={35},
  number={1-2},
  pages={31--53},
  year={2001},
  publisher={Elsevier}
}

@article{aumann1999interactive,
  title={Interactive epistemology II: probability},
  author={Aumann, Robert J},
  journal={International Journal of Game Theory},
  volume={28},
  pages={301--314},
  year={1999},
  publisher={Springer}
}

@article{fagin2013new,
  title={A new approach to updating beliefs},
  author={Fagin, Ronald and Halpern, Joseph Y},
  journal={arXiv preprint arXiv:1304.1119},
  year={2013}
}

@book{ruspini1986logical,
  title={The logical foundations of evidential reasoning},
  author={Ruspini, Enrique H},
  volume={408},
  year={1986},
  publisher={SRI International Menlo Park, CA}
}

@book{halpern2003reasoning,
  title={Reasoning about uncertainty},
  author={Halpern, Joseph Y},
  year={2003},
  publisher={MIT press}
}

@article{sossai2001fusion,
  title={Fusion of symbolic knowledge and uncertain information in robotics},
  author={Sossai, Claudio and Bison, Paolo and Chemello, Gaetano},
  journal={International Journal of Intelligent Systems},
  volume={16},
  number={11},
  pages={1299--1320},
  year={2001},
  publisher={Wiley Online Library}
}

@techreport{sossai1999belief,
  title={Belief-functions logic},
  author={Sossai, C},
  year={1999},
  institution={Technical Report 01/99, Ladseb-CNR, Padova, Italy}
}

@article{smets1991probability,
  title={Probability of provability and belief functions},
  author={Smets, Philippe},
  journal={Logique et Analyse},
  pages={177--195},
  year={1991},
  publisher={JSTOR}
}

@article{aumann1995backward,
  title={Backward induction and common knowledge of rationality},
  author={Aumann, Robert J},
  journal={Games and economic Behavior},
  volume={8},
  number={1},
  pages={6--19},
  year={1995},
  publisher={Elsevier}
}

@inproceedings{heifetz1998modal,
  title={The modal logic of probability},
  author={Heifetz, Aviad and Mongin, Philippe},
  booktitle={Proceedings of the 7th Conference on Theoretical Aspects of Rationality and Knowledge},
  pages={175--185},
  year={1998}
}

@article{fagin1990logic,
  title={A logic for reasoning about probabilities},
  author={Fagin, Ronald and Halpern, Joseph Y and Megiddo, Nimrod},
  journal={Information and computation},
  volume={87},
  number={1-2},
  pages={78--128},
  year={1990},
  publisher={Elsevier}
}

@book{chellas1980modal,
  title={Modal logic: an introduction},
  author={Chellas, Brian F},
  year={1980},
  publisher={Cambridge university press}
}

@article{arlo2005non,
  title={Non-adjunctive inference and classical modalities},
  author={Arl{\'o} Costa, Horacio},
  journal={Journal of Philosophical Logic},
  volume={34},
  pages={581--605},
  year={2005},
  publisher={Springer}
}

@inproceedings{herzig2003modal,
  title={On modal probability and belief},
  author={Herzig, Andreas and Longin, Dominique},
  booktitle={Symbolic and Quantitative Approaches to Reasoning with Uncertainty: 7th European Conference, ECSQARU 2003 Aalborg, Denmark, July 2-5, 2003 Proceedings 7},
  pages={62--73},
  year={2003},
  organization={Springer}
}

@inproceedings{smets1988belief,
  title={Belief functions versus probability functions},
  author={Smets, Philippe},
  booktitle={International Conference on Information Processing and Management of Uncertainty in Knowledge-Based Systems},
  pages={17--24},
  year={1988},
  organization={Springer}
}

@book{lewis1973,
  title={Counterfactuals},
  author={Lewis, D.},
  year={1973},
  publisher={ Blackwell Publishers},
  address = {Oxford}
}

\end{document}